\begin{document}

\pagenumbering{arabic}
\newcounter{comp1}

\newtheorem{definition}{Definition}[section]
\newtheorem{proposition}{Proposition}[section]
\newtheorem{example}{Example}[section]
\newtheorem{method}{Method}[section]
\newtheorem{lemma}{Lemma}[section]
\newtheorem{theorem}{Theorem}[section]
\newtheorem{corollary}{Corollary}[section]
\newtheorem{assumption}{Assumption}
\newtheorem{algorithm}{Algorithm}[section]
\newtheorem{remark}{Remark}[section]
\newcommand{\fig}[1]{\begin{figure}[hbt]
                  \vspace{1cm}
                  \begin{center}
                  \begin{picture}(15,10)(0,0)
                  \put(0,0){\line(1,0){15}}
                  \put(0,0){\line(0,1){10}}
                  \put(15,0){\line(0,1){10}}
                  \put(0,10){\line(1,0){15}}
                  \end{picture}
                  \end{center}
                  \vspace{.3cm}
                  \caption{#1}
                  \vspace{.5cm}
                  \end{figure}}
\newcommand{\Axk}{A(x^k)}
\newcommand{\Aumb}{\sum_{i=1}^{N}A_{i}u_{i}-b}
\newcommand{\Kk}{K^k}
\newcommand{\Kki}{K_{i}^{k}}
\newcommand{\Aukmb}{\sum_{i=1}^{N}A_{i}u_{i}^{k}-b}
\newcommand{\Au}{\sum_{i=1}^{N}A_{i}u_{i}}
\newcommand{\Aukpmb}{\sum_{i=1}^{N}A_{i}u_{i}^{k+1}-b}
\newcommand{\nab}{\nabla^2 f(x^k)}
\newcommand{\xk}{x^k}
\newcommand{\ubk}{\overline{u}^k}
\newcommand{\uhk}{\hat u^k}
\def\QEDclosed{\mbox{\rule[0pt]{1.3ex}{1.3ex}}} 
\def\QEDopen{{\setlength{\fboxsep}{0pt}\setlength{\fboxrule}{0.2pt}\fbox{\rule[0pt]{0pt}{1.3ex}\rule[0pt]{1.3ex}{0pt}}}}
\def\QED{\QEDopen} 
\def\proof{\par\noindent{\em Proof.}}
\def\endproof{\hfill $\Box$ \vskip 0.4cm}
\newcommand{\RR}{\mathbf R}


\title{A First-Order Primal-Dual Method for Nonconvex Constrained Optimization Based On the Augmented Lagrangian} 

\author{Daoli Zhu\thanks {Antai College of Economics and Management and Sino-US Global Logistics
Institute, Shanghai Jiao Tong University, 200030 Shanghai, China
({\tt dlzhu@sjtu.edu.cn}).}
\and Lei Zhao\thanks {School of Naval Architecture, Ocean and Civil Engineering, Shanghai
Jiao Tong University, 200030 Shanghai, China ({\tt l.zhao@sjtu.edu.cn}).}
\and Shuzhong Zhang\thanks {Department of Industrial and Systems Engineering, University of Minnesota, Minneapolis, MN 55455, USA
({\tt zhangs@umn.edu}); joint appointment with
School of Data Science, Shenzhen Research Institute of Big Data, The Chinese University of Hong
Kong, Shenzhen, China ({\tt zhangs@cuhk.edu.cn}).}}
\footnotetext[1]{Acknowledgment: this research was supported by NSFC grants 71471112 and 71871140.}

\maketitle

\begin{abstract}
\vspace{1cm}
Nonlinearly constrained nonconvex and nonsmooth optimization models play an increasingly important role in machine learning, statistics and data analytics.
In this paper, based on the augmented Lagrangian function we introduce a flexible first-order primal-dual method, to be called {\em nonconvex auxiliary problem principle of augmented Lagrangian}\/ (NAPP-AL),
for solving a class of nonlinearly constrained nonconvex and nonsmooth optimization problems.
We demonstrate that NAPP-AL converges to a stationary solution at the rate of $o(1/\sqrt{k})$, where $k$ is the number of iterations.
Moreover, under an additional error bound condition (to be called VP-EB in the paper), we further show that the convergence rate is in fact linear. Finally, we show that the famous Kurdyka-{\L}ojasiewicz property and the metric subregularity imply the afore-mentioned VP-EB condition.

\bigskip

\noindent {\bf Keywords:} nonlinearly constrained nonconvex and nonsmooth optimization, first-order method, primal-dual method, augmented Lagrangian function.

%

\end{abstract}

\normalsize

\newpage

\section{Introduction}\label{intro}

\subsection{Nonlinearly constrained nonconvex and nonsmooth optimization}
In this paper we are concerned with solving the following nonlinearly constrained nonconvex and nonsmooth problem:
\[
\begin{array}{lll}
\mbox{(P)}   &\min       & G(u,v)+J(u)+H(v)\\
             &\rm {s.t}  & \Omega(u)+\Phi(u)+Bv=0 \\
             &           & u\in \mathbf{U},v\in\RR^{d}
\end{array}
\]
where $G:\RR^{n}\times\RR^{d}\rightarrow\RR$ is a differentiable function, possibly nonconvex; $J:\RR^{n}\rightarrow\RR$ is lower semi-continuous (l.s.c.), possibly nonsmooth and nonconvex; $H: \RR^{d}\rightarrow\RR$ is a differentiable function, possibly nonconvex; $\mathbf{U}$ is closed convex subset of $\RR^{n}$. For simplicity in referencing, let us denote the overall objective to be $F(u,v):=G(u,v)+J(u)+H(v)$, and $\Theta(u):=\Omega(u)+\Phi(u)$, with $\Theta:\RR^{n}\rightarrow\RR^m$ being a differentiable mapping, possibly nonlinear; $B$ is an $m\times d$ tall matrix.

\indent In our bid to solve (P), 
we keep in mind that variable $u$ in model (P) often has a block structure depicted as follows (where the acronym `b' stands for `block'):
\[
\begin{array}{lll}
\mbox{(P(b))}     & \min       &    G(u_1,\cdots,u_N,v)+\sum\limits_{i=1}^NJ_i(u_i)+H(v)\\
                  & \rm {s.t}  &    \Omega(u_1,\cdots,u_N)+\sum\limits_{i=1}^N\Phi_i(u_i)+Bv=0 \\
                  &            &    u_i\in \mathbf{U}_i, i=1,...,N, v\in\RR^{d}
\end{array}
\]
where $J_i$'s are l.s.c.\ but possibly nonsmooth and nonconvex; $\mathbf{U}_i$'s are closed convex subset of $\RR^{n_i}$; $\Phi_i:\RR^{n_i}\rightarrow\RR^m$ is differentiable and possibly nonconvex, and $\sum\limits_{i=1}^N n_i=n$. As we will see later, our newly proposed method is particularly suitable for (P(b)), because in that case the auxiliary subproblems enjoy an advantage of being computable in parallel.

\subsection{Motivating examples}
There are numerous applications that can be modelled by (P) or (P(b)). Before discussing the solution method, let us first consider a few illustrative examples below.

\subsubsection{Nonconvex empirical risk minimization}

Empirical risk minimization (ERM) is a popular supervised learning method that is widely used for  classification and regression. The ERM problem can be expressed as follows~\cite{MeiBai2018}:
\[
\mbox{(ERM)}\quad \min_{u\in\RR^n} \,\, \phi\left(g(u)\right) + R(u)=\frac{1}{m}\sum_{j=1}^m\phi_j\left(g_j(u)\right)+R(u)
\]
where $g:\RR^n\rightarrow\RR^m$ is a smooth mapping (vector-valued function), possibly nonconvex; $\phi_j:\RR\rightarrow\RR$ is a loss function (possibly nonconvex) associated with $g_j(\cdot)$, $j=1,...,m$; and $R:\RR^n\rightarrow\RR$ is a regularization function (possibly nonconvex) for the predictor $u\in\RR^n$. By introducing an auxiliary variable $v\in\RR^m$, the ERM problem can be formulated as follows:
\[
\begin{array}{ll}
\min  & \phi(v) + R(u)=\frac{1}{m}\sum\limits_{j=1}^m\phi_j(v_j) + R(u)\\
\rm {s.t}  & g_j(u)-v_j=0,\quad j=1,...,m\\
           & u\in \RR^{n},v\in\RR^{m}.
\end{array}
\]

\subsubsection{Robust principal component analysis}
Robust principal component analysis (RPCA) is a fundamental tool in machine learning and data science to obtain a low-dimensional expression for high-dimensional data with gross errors. The purpose of RPCA is to decompose a given data matrix $M\in\RR^{m\times n}$ into two parts $M=U^0(V^0)^{\top}+S^0$ where $U^0(V^0)^{\top}$ is a low rank matrix and $S^0$ is a sparse matrix,  
formulated as follows~\cite{MaAybat2018}:
\[
\begin{array}{lll}
\mbox{(RPCA)}  &\min  & \|U\|_F^{2}+\|V\|_F^2 +\rho_1 \mathcal{R}(S)+\rho_2 \|N\|_F^2\\
             &\rm {s.t}  & UV^{\top}+S+N-M=0\\
             &           & S, N\in \RR^{m\times n}, U\in\RR^{m\times r}, V\in\RR^{n\times r}
\end{array}
\]
where $\rho_1,\rho_2$ are some weight parameters; $\mathcal{R}:\RR^{m\times n}\rightarrow\RR$ is a regularization function (possibly nonconvex) for the sparse matrix $S$. Moreover, $r<\min\{m,n\}$ is the estimated rank of $UV^{\top}$, $N$ is the noise matrix, and $\|\cdot\|_F$ is the Frobenius norm.

\subsubsection{Nonconvex sharing problem}
Consider the following sharing problem~\cite{Boyd2011,HongLuoR2016}:
\[
\begin{array}{ll}
\min       & G\left(\sum\limits_{i=1}^N\Theta_i(u_i)\right)+\sum\limits_{i=1}^NJ_i(u_i)\\
\rm {s.t}  & u_i\in \mathbf{U}_i, i=1,...,N
\end{array}
\]
where $u_i \in \RR^{n_i}$ is the variable associated with a given agent $i$.
To facilitate distributed computation, this problem can be equivalently formulated as a nonlinearly constrained problem by introducing an additional variable $v\in\RR^m$:
\[
\begin{array}{ll}
\min       & G(v)+\sum\limits_{i=1}^NJ_i(u_i) \\
\rm {s.t.} & \sum\limits_{i=1}^N\Theta_i(u_i)-v=0 \\
           & u_i\in \mathbf{U}_i, i=1,...,N.
\end{array}
\]

\subsection{Background and related works}

Our bid to solve (P) is based on the notion of {\em Auxiliary Problem Principle}\/ (APP) as introduced by Cohen~\cite{C80}, which is later specialized to constrained optimization via the {\em augmented Lagrangian}\/ function (APP-AL) by Cohen and Zhu~\cite{CohenZ}. The high level concept of APP is to exploit the structure of a computational task at hand by decomposing it into a series of simpler tasks -- the so-called auxiliary problems. Clearly, the approach will need be tailored to the structure of the task in question. In the case of APP-AL, it is essentially a Lagrangian primal-dual approach. In a series of recent papers, Zhao and Zhu~\cite{ZZ20} and Zhao {\em et al.}~\cite{ZZJ17} extended APP-AL to accommodate constrained large-scaled convex optimization models. At the core of APP-AL, the Lagrangian dual variable plays an important role; the main benefit of the approach is to turn a large scale problem into a series of {\em decomposed}\/ simpler subproblems to be solved {\em in parallel}. As a primal-dual method, the philosophy of APP-AL is closely related to another approach that has stirred up much research activities in the recent years: {\em Alternating Direction Method of Multipliers}\/ (ADMM). For a historical account as well as recent extensions of the ADMM, we refer the readers to the excellent survey on ADMM by Boyd {\em et al.}~\cite{Boyd2011}, and the references therein. Though ADMM and APP-AL share common features on the ground of Lagrangian dual-variable gradient updating and primal block-variable decomposition,
the guiding principles of these two approaches differ significantly. For instance, APP-AL aims for primal gradient-proximal updating instead of block optimization; APP-AL advocates for linearization of the augmented Lagrangian function, which makes the Jacobian update approach particularly suitable for distributive computations. 
ADMM and its variants apply a well known Gauss-Seidel-like minimization at least for two blocks of primal variables. Noticeable differentiations aside, the boundaries between the two are indeed quite blur.
For instance, under the framework of ADMM, Deng {\em et al.}~\cite{Deng2017} introduced a proximal term in the updates of the primal variables, which can be used to ensure convergence even in the Jacobian-style updates of the multi-block primal variables; Gao and Zhang~\cite{GZ16} introduced a gradient-proximal updating scheme for primal blocks in a composite form under ADMM setting, and proved convergence for convex problems; Jiang {\em et al.}~\cite{JLMZ19}, Wang {\em et al.}~\cite{WYZ19} and Zhang and Luo~\cite{ZL20}  extended the ADMM scheme to nonconvex settings, albeit the constraints are all linear.
Under a broad scheme of augmented Lagrangian approach, Bolte {\em et al.}~\cite{BST18} considered nonconvex and nonsmooth composite optimization, with global convergence guarantees.

\subsection{Main contributions and outline of the paper}
In this paper, we generalize APP-AL~\cite{CohenZ} to solve a {\it nonlinearly and nonconvex}\/ constrained nonsmooth optimization model,
which we shall term {\it nonconvex auxiliary problem principle of augmented Lagrangian}\/ (NAPP-AL) in the paper. At each iteration, NAPP-AL generates a nonlinear approximation of the primal problem of the augmented Lagrangian dual problem. The approximation incorporates both linearization and a Bregman distance-like proximal term. The approximation enables  decomposition to evaluate the primal problem. If appropriate Bregman distance-like function is chosen, then NAPP-AL allows for parallel computation.
In this paper, (i) we show the global convergence of the iterates to a stationary solution, under standard assumptions; (ii) we establish an iteration complexity bound of reaching an $\epsilon$-stationary solution of (P) in $o(1/\epsilon^2)$ iterations; (iii) under an additional error-bound assumption, a linear convergence rate is guaranteed. The paper is organized as follows. In Section~\ref{Pre}, we shall introduce the notations and assumptions surrounding our discussion. In Section~\ref{FBS} we shall present the new solution method for (P). An iteration complexity analysis will be presented in Section~\ref{convergence_FBS}. Finally, in Section~\ref{LKL} we shall show that the convergence rate can be improved to linear, under an error bound condition. Discussions on the nature of the error bound condition and its relations to other properties are presented in Section~\ref{LMS}.


\section{Notations and Assumptions}\label{Pre}
This section provides some useful preliminaries for subsequent discussions and summarizes the notations and assumptions. We denote Euclidean scalar product of $\RR^n$ and Euclidean norm as $\langle\cdot\rangle$ and $\|\cdot\|$, respectively. For a matrix $A$, the minimum eigenvalue is denoted by $\lambda_{\min}(A)$. The spectral norm of a matrix $A$ is denoted by $\|A\|$. Let $\mathbf{C}$ be a subset of $\RR^n$ and $x$ be any point in $\RR^n$. Define
$${\rm dist}(x,\mathbf{C})=\inf\{\|x-z\|:z\in\mathbf{C}\}.$$
By default, we assume ${\rm dist}(x,\mathbf{C})=+\infty$ if $\mathbf{C}=\emptyset$.
We introduce below the definitions of subdifferential calculus and limiting normal cone; see e.g.~\cite{Mordukhovich2006,RockWets2009}.
\begin{definition}[\cite{Mordukhovich2006,RockWets2009}]
Let $\psi$: $\RR^n\rightarrow\RR\cup\{+\infty\}$ be a proper lower semicontinuous function.
\begin{itemize}
\item[{\rm(i)}] The domain of $\psi$ -- denoted by ${\rm dom}(\psi)$ -- is $\{x\in\RR^n:\psi(x)<+\infty\}$.
\item[{\rm(ii)}] For each $\bar{x}\in {\rm dom}(\psi)$, the Fr\'{e}chet subdifferential of $\psi$ at $\bar{x}$ -- denoted by $\partial_{F}\psi(\bar{x})$ -- is the set of all vector $\xi\in\RR^n$ satisfying
    $\lim\limits_{\substack{x\neq\bar{x}\\ x\rightarrow\bar{x}}}\inf\frac{1}{\|x-\bar{x}\|}\left[\psi(x)-\psi(\bar{x})-\langle\xi,x-\bar{x}\rangle\right]\geq0.$
    If $\bar{x}\notin {\rm dom}(\psi)$, then $\partial_F\psi(\bar x)=\emptyset$.
\item[{\rm(iii)}] The limiting-subdifferential (\cite{Mordukhovich2006}), or simply the subdifferential for short, of $\psi$ at $\bar{x}\in {\rm dom}(\psi)$ -- denoted by $\partial\psi(\bar{x})$ -- is defined as:
    $\partial\psi(\bar{x}):=\{\xi\in\RR^n:\exists x_n\rightarrow\bar{x},\, \psi(x_n)\rightarrow \psi(\bar{x}),\, \xi_n\in\partial_F\psi(x_n)\rightarrow\xi\}.$
\item[{\rm(iv)}] The limiting normal cone at $u$ respect to convex set $\mathbf{U}$ is
$\mathcal{N}_{\mathbf{U}}(u)=\{\xi: \: \langle\xi,\zeta-u\rangle\leq0, \forall \zeta\in\mathbf{U}\}.$
Noted that $\mathcal{N}_{\mathbf{U}}(u)=\partial\mathcal{I}_{\mathbf{U}}(u)$, where $\mathcal{I}_{\mathbf{U}}(u)$ is indicator function of set $\mathbf{U}$:
$$
\mathcal{I}_{\mathbf{U}}(u)=\left\{\begin{array}{ll}
0,       & \mbox{if $u\in\mathbf{U}$;}  \\
+\infty, & \mbox{otherwise.}
\end{array}\right.
$$
\end{itemize}
\end{definition}

For convenience, some useful properties of the l.s.c.\ functions are listed in the following proposition.
\begin{proposition}[\cite{Mordukhovich2006,RockWets2009}]
Let $\varphi:\RR^n\rightarrow\RR\cup\{+\infty\}$ and $\psi:\RR^n\rightarrow\RR\cup\{+\infty\}$ be proper l.s.c.\ functions. Then it holds that
\begin{itemize}
\item [{\rm(i)}] If $\varphi$ is continuously differentiable in $u$, then $\partial(\varphi+\psi)(u)=\nabla\varphi(u)+\partial\psi(u)$.
\item [{\rm(ii)}] If $\varphi$ is locally Lipschitz continuous at $u$, $\psi$ is l.s.c.\ and finite at $u$, then $\partial(\varphi+\psi)(u)\subseteq\partial\varphi(u)+\partial\psi(u)$.
\item [{\rm(iii)}] If $\bar{u}$ is a local minimum of $\varphi$, then $0\in\partial\varphi(\bar{u})$.
\end{itemize}
\end{proposition}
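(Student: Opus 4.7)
The plan is to handle the three items in reverse order of difficulty. Item (iii) is the version of Fermat's rule for nonsmooth functions, and I would prove it directly from the definition of the Fr\'{e}chet subdifferential: if $\bar u$ is a local minimum of $\varphi$, then $\varphi(u)-\varphi(\bar u)\geq 0$ for all $u$ in a neighborhood of $\bar u$, which immediately gives $\liminf_{u\to\bar u,\, u\neq\bar u}\frac{\varphi(u)-\varphi(\bar u)-\langle 0,u-\bar u\rangle}{\|u-\bar u\|}\geq 0$; hence $0\in\partial_F\varphi(\bar u)\subseteq\partial\varphi(\bar u)$.

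For item (i), the approach is to establish the equality in two stages: first at the level of the Fr\'{e}chet subdifferential, then upgrading by a limiting argument. When $\varphi$ is $C^1$, the expansion $\varphi(u)-\varphi(\bar u)=\langle\nabla\varphi(\bar u),u-\bar u\rangle+o(\|u-\bar u\|)$ can be substituted into the definition of $\partial_F(\varphi+\psi)(\bar u)$ to yield the tight correspondence $\xi\in\partial_F(\varphi+\psi)(\bar u)\iff \xi-\nabla\varphi(\bar u)\in\partial_F\psi(\bar u)$. I would then pass to the limit using continuity of $\varphi$ and $\nabla\varphi$: for any sequence $x_n\to\bar u$ with $(\varphi+\psi)(x_n)\to(\varphi+\psi)(\bar u)$ and $\xi_n\in\partial_F(\varphi+\psi)(x_n)\to\xi$, continuity of $\varphi$ forces $\psi(x_n)\to\psi(\bar u)$, while $\nabla\varphi(x_n)\to\nabla\varphi(\bar u)$ implies $\xi_n-\nabla\varphi(x_n)\in\partial_F\psi(x_n)$ converges to $\xi-\nabla\varphi(\bar u)\in\partial\psi(\bar u)$. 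The reverse inclusion is obtained by running the same construction in reverse.

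Item (ii) is the most delicate: the inclusion form of the sum rule for locally Lipschitz $\varphi$ with $\psi$ proper l.s.c. The plan is to apply the fuzzy sum rule for Fr\'{e}chet subdifferentials and then pass to the limit, leveraging the local Lipschitz hypothesis to keep one of the subgradient sequences bounded. Concretely, for any $\xi\in\partial(\varphi+\psi)(\bar u)$, pick sequences $x_n\to\bar u$ and $\xi_n\in\partial_F(\varphi+\psi)(x_n)\to\xi$. Applying the fuzzy sum rule at each $x_n$ yields points $y_n,z_n$ arbitrarily close to $x_n$ with $\varphi(y_n)\to\varphi(\bar u)$, $\psi(z_n)\to\psi(\bar u)$, together with subgradients $\eta_n\in\partial_F\varphi(y_n)$, $\zeta_n\in\partial_F\psi(z_n)$ satisfying $\|\eta_n+\zeta_n-\xi_n\|\to 0$. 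Local Lipschitz continuity of $\varphi$ near $\bar u$ guarantees that $\{\eta_n\}$ is eventually bounded; extracting a convergent subsequence $\eta_n\to\eta$ gives $\eta\in\partial\varphi(\bar u)$, whence $\zeta_n\to\xi-\eta\in\partial\psi(\bar u)$, producing $\xi\in\partial\varphi(\bar u)+\partial\psi(\bar u)$.

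The main obstacle is (ii): its derivation rests on the fuzzy sum rule (equivalently, Mordukhovich's basic sum rule on Asplund spaces), which itself is a nontrivial result of variational analysis. The local Lipschitz hypothesis on $\varphi$ is crucial for extracting a convergent subsequence from $\{\eta_n\}$; without it the inclusion would require an additional qualification condition involving the singular subdifferentials. Since all three facts are standard (see the cited monographs of Mordukhovich and Rockafellar-Wets), in the paper I would simply invoke those references rather than reproducing the full derivations here.
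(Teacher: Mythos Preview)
Your proposal is correct, and your closing remark aligns exactly with the paper's treatment: the proposition is stated with a citation to \cite{Mordukhovich2006,RockWets2009} and given no proof whatsoever. The proof sketches you supply for (i)--(iii) are sound and standard, but they go beyond what the paper does; since the paper simply invokes the references, there is nothing further to compare.
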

The Lagrangian of (P) is defined as
\[
\mathcal{L}(w)=F(u,v)+\langle p,\Theta(u)+Bv\rangle,\quad\mbox{with}\quad w=(u,v,p)
\]
and the augmented Lagrangian function of (P) is defined as
\[
\mathcal{L}_{\gamma}(w)=F(u,v)+\langle p,\Theta(u)+Bv\rangle+\frac{\gamma}{2}\|\Theta(u)+Bv\|^2.
\]
The stationary point of (P) $w\in\mathbf{U}\times\RR^{d}\times\RR^m$ satisfies the following KKT condition:
\[
0\in\partial\mathcal{L}(w)=\left(\begin{array}{c}
\nabla_uG(u,v)+\partial J(u)+\mathcal{N}_{\mathbf{U}}(u)+\left(\nabla\Theta(u)\right)^{\top}p  \\
\nabla_vG(u,v)+\nabla H(v)+B^{\top}p\\
\Theta(u)+Bv
\end{array}\right).
\]
We note that this condition is equivalent to
\[
0\in\partial\mathcal{L}_{\gamma}(w) 
=\left(\begin{array}{c}
\nabla_uG(u,v)+\partial J(u)+\mathcal{N}_{\mathbf{U}}(u)+\left(\nabla\Theta(u)\right)^{\top}[p+\gamma(\Theta(u)+Bv)]  \\
\nabla_vG(u,v)+\nabla H(v)+B^{\top}[p+\gamma(\Theta(u)+Bv)]\\
\Theta(u)+Bv\end{array}\right),
\]
where $\partial J(u)$ is the sub-differential of $J$ at $u$, $\mathcal{N}_{\mathbf{U}}(u)$ is the limiting normal cone to convex set $\mathbf{U}$. In the following, we introduce the notion of {\it approximate}\/ stationary solutions for (P).
\begin{definition} [$\varepsilon$-stationarity for (P)]
We call $w^*\in\mathbf{U}\times\RR^{d}\times\RR^m$ to be a $\varepsilon$-stationary point of (P) if
$${\rm dist}\left(0,\partial\mathcal{L}(w^*)\right)\leq\varepsilon\quad\mbox{(or ${\rm dist}\left(0,\partial\mathcal{L}_{\gamma}(w^*)\right)\leq\varepsilon$)}.$$
\end{definition}
If $\varepsilon=0$, then the above condition implies that $0\in\partial\mathcal{L}(w^*)\quad\mbox{(or $0\in\partial\mathcal{L}_{\gamma}(w^*)$)}$.
Thus, in that case $w^*$ is a stationary point of (P).
Throughout this paper, we make the following rather standard assumptions on model (P) under consideration. 
\begin{assumption}\label{assump1}
{\ }

\begin{itemize}
\item[{\rm ($H_1$)}] $G:\RR^{n}\times\RR^{d}\rightarrow(-\infty,+\infty]$ is a nonconvex differentiable function with ${\rm dom } (G)$ convex and with its gradient $\nabla G$ being $L_G$-Lipschitz continuous on ${\rm dom } (G)$.
\item[{\rm ($H_2$)}] $H:\RR^{d}\rightarrow(-\infty,+\infty]$ is a nonconvex differentiable function with ${\rm dom } (H)$ convex and with its gradient $\nabla H$ being $L_H$-Lipschitz continuous on ${\rm dom } (H)$.
\item[{\rm ($H_3$)}] $J:\RR^{n}\rightarrow(-\infty,\infty]$ is l.s.c.\ on ${\rm dom }(J)$, and ${\rm dom }(J)$ is a convex set when $\mathbf{U}=\RR^n$; $J$ is local Lipschitz continuous on ${\rm dom }(J) \cap\mathbf{U}$ when $\mathbf{U}$ is closed convex set of $\RR^n$.
\item[{\rm ($H_4$)}] $\Theta:\RR^{n}\rightarrow\RR^m$ is $L^0_{\Theta}$-Lipschitz continuous, i.e., $\|\Theta(u)-\Theta(u')\|\leq L^0_{\Theta}\|u-u'\|,\, \forall u,u'$.
\item[{\rm ($H_5$)}] $\Omega(u)=\left(\Omega_1(u),...,\Omega_m(u)\right)^{\top}$, function $\Omega_j:\RR^n\rightarrow\RR$, has $L_{\Omega_j}$-Lipschitz gradient, $j=1,...,m$, 
and denote $L_{\Omega}:=\sum\limits_{j=1}^mL_{\Omega_j}$.
\item[{\rm ($H_6$)}] $B\in\RR^{m \times d}$ is a tall 
matrix, and the image of $\Theta$ is contained in that of $B$, i.e.\ ${\rm Im}(\Theta)\subseteq {\rm Im}(B)$.
\item[{\rm ($H_7$)}] $F$ is lower bounded and coercive over the feasible set
$\{(u,v)\in\RR^n\times\RR^{d}:\Theta(u)+Bv=0\}.$
\end{itemize}
\end{assumption}
By {\rm ($H_1$)} and  {\rm ($H_2$)} in Assumption~\ref{assump1} (cf.~Theorem 3.2.12 of~\cite{Ortega1970}),
we have the following descent inequalities for $G$ and $H$:
\begin{eqnarray}
& & G(u,v)-G(u',v')-\langle\nabla_u G(u',v'),u-u'\rangle-\langle\nabla_v G(u',v'),v-v'\rangle  \nonumber \\
& & \leq  \frac{L_G}{2}\left[\|u-u'\|^2+\|v-v'\|^2\right]; \label{eq:Lip_G} \\
& & G(u,v)-G(u,v')-\langle\nabla_v G(u,v'),v-v'\rangle\leq\frac{L_G}{2}\|v-v'\|^2; \label{eq:Lip_G_v} \\
& & H(v)-H(v')-\langle\nabla H(v'),v-v'\rangle\leq\frac{L_H}{2}\|v-v'\|^2.\label{eq:Lip_H}
\end{eqnarray}
By {\rm ($H_5$)} in Assumption~\ref{assump1} (cf.~Theorem 3.2.12 of~\cite{Ortega1970}), the following inequality holds:
\begin{equation}
\langle p,\Omega(u)-\Omega(u')-\nabla\Omega(u')(u-u')\rangle\leq\frac{\|p\|L_{\Omega}}{2}\|u-u'\|^2 ,\quad \forall p\in\RR^m .\label{eq:Lip_Omega}
\end{equation}


\section{An Auxiliary Problem Principle Approach to (P)}
\label{FBS}

We now propose a new method, to be called {\it nonconvex auxiliary problem principle of multipliers}\/ (NAPP-AL) for finding a stationary point for the non-convex optimization model (P).
To begin, let us
introduce a Bregman distance function $D(u,u')=K(u)-K(u')-\langle\nabla K(u'),u-u'\rangle$ where $K$ is $\beta$-strongly convex and $L_K$-gradient Lipschitz.
Our algorithm is schematically outlined as follows: \\

\noindent\rule[0.25\baselineskip]{\textwidth}{1.5pt}
{\bf Nonconvex Auxiliary Problem Principle with Augmented Lagrangian (NAPP-AL)} \\
\noindent\rule[0.25\baselineskip]{\textwidth}{0.5pt}
{Parameters:} $L^0_{\Theta},\, L_{\Omega},\, L_H,\, L_G$, and $\gamma>0, \, \beta,\, 0<\sigma<1$; \\
{Variables:}  $u,\, v,\, p,\, q$. \\
{Initialize:} $u^0\in\RR^{n}$, $v^0\in\RR^{d}$, $p^0\in\RR^m$,  $k:=0$. \\
 \textbf{For} iteration $k$ \textbf{do}
\begin{eqnarray}\label{APk}
& & \mbox{Set $q^k=p^k+\gamma(\Theta(u^k)+Bv^k)$,} \mbox{ and choose $\delta_k>0$ and $\epsilon_k$ satisfying $\sigma \delta_k \le \epsilon^k \le \delta_k$.} \nonumber \\
& & u^{k+1}\in\arg\min_{u\in\mathbf{U}}\,\, \langle\nabla_u G(u^k,v^k),u\rangle+J(u)+\langle q^k,
\nabla\Omega(u^k) u+\Phi(u)\rangle+\frac{1}{\epsilon^k}D(u,u^k);\label{eq:primal_u}\\
& & v^{k+1}:=\arg\min_{v\in\RR^{d}}\,\, \langle\nabla_v G(u^k,v^k)+\nabla H(v^k),v\rangle+\langle q^k,Bv\rangle+\frac{\gamma}{2}\|B(v-v^k)\|^2;\label{eq:primal_v}\\
& & p^{k+1}:=p^k+\gamma(\Theta(u^{k+1})+Bv^{k+1}); \label{eq:dual} \\
& & k:=k+1. \nonumber
\end{eqnarray}
\textbf{return For}\\
\noindent\rule[0.25\baselineskip]{\textwidth}{1.5pt}

The above is in principle an approximated {\em augmented Lagrangian method}\/ (ALM), where $(u,v)$ is the primal variable and $p$ is the variable for the augmented Lagrangian dual problem. Steps \eqref{eq:primal_u} and \eqref{eq:primal_v} are designed to evaluate the augmented Lagrangian dual function at the given dual variable $p^k$. In \eqref{eq:primal_u}, the $u$ variable is updated by a gradient proximal step where the augmented term $\frac{\gamma}{2}\|\Theta(u)+Bv\|^2$ is also linearized.
Similarly, Step~\eqref{eq:primal_v} is a gradient proximal step on $v$, with the augmented term being linearized.
Vector $q^k$ in \eqref{eq:primal_u} and \eqref{eq:primal_v} incorporates this linearization.

\begin{remark}
In the implementation of Step \eqref{eq:primal_u}of NAPP-AL for (P(b)),
 if $J(u)=\sum\limits_{i=1}^NJ_i(u_i)$, $\Phi(u)=\sum\limits_{i=1}^N\Phi_i(u_i)$, $\mathbf{U}=\mathbf{U}_1\times\cdots\times\mathbf{U}_N$, $u_i\in\mathbf{U}_i$, $i=1,...,N$, and if we choose an additive function $K(u)=\sum\limits_{i=1}^NK_i(u_i)$, then the $k$-th iteration 
is split into $N$ independent subproblems:
$$
u_i^{k+1}\in\arg\min\limits_{u_i\in\mathbf{U}_i}\langle\left(\nabla_u G(u^k,v^k)\right)_i,u_i\rangle+J_i(u_i)+\langle q^k,\left(\nabla\Omega(u^k)\right)_i  u_i+\Phi_i(u_i)\rangle+\frac{1}{\epsilon^k}D_i(u_i,u_i^k).
$$
Typical choices of $D_i(u_i,u_i^k)$ include $D_i(u_i,u_i^k)=\frac{1}{2}\|u_i-u_i^k\|^2$ or $D_i(u_i,u_i^k)=\frac{1}{2}\|u_i-u_i^k\|_{H_i}^2$.
\end{remark}

\begin{remark}
We shall comment that we assumed Step~\eqref{eq:primal_u} of NAPP-AL, though possibly nonconvex, is solvable to global optimality. Many problems arising from machine learning and statistics are indeed in this category. For example, for nonconvex regularization functions such as the smoothly clipped absolute deviation (SCAD), the minimax concave penalty (MCP), and Capped-$\ell_1$ are separable and admit closed-form solutions for~\eqref{eq:primal_u}.
Furthermore, subproblem~\eqref{eq:primal_v} is a strongly convex quadratic minimization, requiring only one pre-solved matrix factorization.
\end{remark}

To facilitate further analysis, let us make the following assumption:
\begin{assumption}\label{assump2}
{\ }

\begin{itemize}
\item [{\rm(i)}] $K$ is strongly convex with parameter $\beta$ and differentiable with its gradient Lipschitz continuous with parameter $L_K$ on $\RR^n$.
\item [{\rm(ii)}] Parameter $\gamma$ is chosen to satisfy: $\gamma>\frac{\sqrt{57}+1}{2\lambda_{\min}(B^{\top}B)}(L_G+L_H).$
\end{itemize}
\end{assumption}

Next proposition reveals a property of $\mathcal{L}_{\gamma}$ in the progression of NAPP-AL.

\begin{proposition} 
\label{prop}
Suppose that in the implementation of NAPP-AL, the parameters are chosen to satisfy the requirements in Assumptions~\ref{assump1} and~\ref{assump2}, and at each iteration we set
\begin{equation} \label{delta-k}
\delta_k := \beta \left(L_G+\|q^k\|L_{\Omega}+\gamma (L^0_{\Theta})^2+\frac{14\gamma\|B\|^2(L^0_{\Theta})^2}{\lambda_{\min}(B^{\top}B)}+\frac{14(L_G+\gamma\|B\| L^0_{\Theta})^2}{\gamma\lambda_{\min}(B^{\top}B)}+1\right)^{-1}.
\end{equation}
Let the sequence $\{w^{k}\}$ is generated by NAPP-AL.
Then
$${\rm dist}\left(0,\partial\mathcal{L}_{\gamma}(w^{k+1})\right) \leq h(u^k,p^k)\|w^k-w^{k+1}\|,$$
where
\[
h(u^k,p^k)
:= \max\left\{2L_G+\|q^k\|L_{\Omega}+\gamma\|B\|L^0_{\Theta}+\gamma (L^0_{\Theta})^2+
\frac{L_K}{\sigma\delta_k},
2L_G+L_H+\gamma\|B\|L^0_{\Theta}, L^0_{\Theta}+\|B\|+\frac{1}{\gamma}\right\}.
\]
\end{proposition}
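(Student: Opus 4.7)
The plan is to derive an explicit element of $\partial\mathcal{L}_{\gamma}(w^{k+1})$ directly from the first-order optimality conditions of the three NAPP-AL updates, and then bound its norm through the Lipschitz hypotheses collected in Assumption~\ref{assump1}.

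First, I would read off the three optimality conditions. The $u$-update~\eqref{eq:primal_u} is a sum of smooth terms plus $J+\mathcal{I}_{\mathbf{U}}$, so by the subdifferential calculus just recalled there exists $s^{k+1}\in\partial J(u^{k+1})+\mathcal{N}_{\mathbf{U}}(u^{k+1})$ solving $s^{k+1}+\nabla_u G(u^k,v^k)+\nabla\Omega(u^k)^{\top}q^k+\nabla\Phi(u^{k+1})^{\top}q^k+\tfrac{1}{\epsilon^k}(\nabla K(u^{k+1})-\nabla K(u^k))=0$. The strongly convex quadratic $v$-subproblem~\eqref{eq:primal_v} gives $\nabla_v G(u^k,v^k)+\nabla H(v^k)+B^{\top}q^k+\gamma B^{\top}B(v^{k+1}-v^k)=0$. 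The dual step~\eqref{eq:dual} supplies the identity $\Theta(u^{k+1})+Bv^{k+1}=\tfrac{1}{\gamma}(p^{k+1}-p^k)$, which also gives the third component of the desired subgradient for free.

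Second, I would construct an explicit $\xi^{k+1}\in\partial\mathcal{L}_{\gamma}(w^{k+1})$ by substituting the stationary $s^{k+1}$ into the first row of the displayed formula for $\partial\mathcal{L}_{\gamma}$ and eliminating $\nabla_v G(u^k,v^k)+\nabla H(v^k)$ from the second row using the $v$-optimality condition. The key algebraic move -- and, I expect, the main obstacle -- is to realise that the ``shifted dual'' $p^{k+1}+\gamma(\Theta(u^{k+1})+Bv^{k+1})$ appearing in $\partial\mathcal{L}_{\gamma}(w^{k+1})$ differs from the subproblem's $q^k=p^k+\gamma(\Theta(u^k)+Bv^k)$ exactly by $(p^{k+1}-p^k)+\gamma(\Theta(u^{k+1})-\Theta(u^k))+\gamma B(v^{k+1}-v^k)$. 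Combined with the splitting $\nabla\Theta=\nabla\Omega+\nabla\Phi$, this cancellation makes the discrepancy in $\xi_u$ decompose cleanly into (i) the telescoping piece $[\nabla\Omega(u^{k+1})-\nabla\Omega(u^k)]^{\top}q^k$ and (ii) a $\nabla\Theta(u^{k+1})^{\top}[\cdots]$ piece whose inner bracket is the displacement just mentioned. After the substitution, $\xi_u$ and $\xi_v$ are each sums of gradient/function differences taken between $(u^k,v^k,p^k)$ and $(u^{k+1},v^{k+1},p^{k+1})$, and $\xi_p=\Theta(u^{k+1})+Bv^{k+1}$.

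Third, I would bound each $\|\xi_i\|$ by a linear combination of $\|u^{k+1}-u^k\|$, $\|v^{k+1}-v^k\|$, $\|p^{k+1}-p^k\|$ using the hypotheses of Assumption~\ref{assump1}: the joint $L_G$-Lipschitzness of $\nabla G$ (producing the two ``$L_G$'' contributions, once inside $\xi_u$ and once inside $\xi_v$), the $L_H$-Lipschitzness of $\nabla H$, the $L^0_{\Theta}$-Lipschitzness of $\Theta$ (which bounds both $\|\nabla\Theta(u^{k+1})\|$ and $\|\Theta(u^{k+1})-\Theta(u^k)\|$ and accounts for the $\gamma(L^0_{\Theta})^2$ and $\gamma\|B\|L^0_{\Theta}$ terms), the inequality $\|[\nabla\Omega(u^{k+1})-\nabla\Omega(u^k)]^{\top}q^k\|\le \|q^k\|L_{\Omega}\|u^{k+1}-u^k\|$ via the $L_{\Omega_j}$-Lipschitzness of each $\nabla\Omega_j$ and $L_{\Omega}=\sum_j L_{\Omega_j}$, the $L_K$-smoothness of $K$ combined with $\epsilon^k\ge \sigma\delta_k$ for the Bregman piece, and finally the identity $\xi_p=\tfrac{1}{\gamma}(p^{k+1}-p^k)$. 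Collecting these coefficients for $\|\xi_u\|$, $\|\xi_v\|$, $\|\xi_p\|$ -- with the stray $L^0_{\Theta}$ and $\|B\|$ factors absorbed into the $\xi_p$ bound -- reproduces the three entries inside the maximum in $h(u^k,p^k)$, and the triangle-type estimate ${\rm dist}(0,\partial\mathcal{L}_{\gamma}(w^{k+1}))\le \|\xi^{k+1}\|\le h(u^k,p^k)\|w^{k+1}-w^k\|$ then follows. Once the cancellation in step two is in hand, all remaining work is routine application of the Lipschitz hypotheses and a regrouping of coefficients.
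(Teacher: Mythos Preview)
Your proposal is correct and follows essentially the same route as the paper: write the optimality conditions of the three updates, build an explicit $\xi^{k+1}\in\partial\mathcal{L}_{\gamma}(w^{k+1})$ by exploiting the identity between the ``shifted dual'' at step $k+1$ and $q^k$ (yielding precisely the $[\nabla\Omega(u^{k+1})-\nabla\Omega(u^k)]^{\top}q^k$ and $\nabla\Theta(u^{k+1})^{\top}[\cdots]$ pieces you describe), and then bound each block via the Lipschitz constants in Assumption~\ref{assump1} together with $\epsilon^k\ge\sigma\delta_k$. The three entries of the $\max$ in $h(u^k,p^k)$ arise exactly as you anticipate, from collecting the coefficients in the bounds for $\|\xi_u\|$, $\|\xi_v\|$, and $\|\xi_p\|$.
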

\begin{proof}
The optimality condition of subproblems~\eqref{eq:primal_u} and~\eqref{eq:primal_v} yields that
\begin{eqnarray*}
0 &\in &\nabla_u G(u^k,v^k)+\partial J(u^{k+1})+\mathcal{N}_{\mathbf{U}}(u^{k+1})+\left(\nabla\Omega(u^{k})+\nabla\Phi(u^{k+1})\right)^{\top}q^k+\frac{1}{\epsilon^k}\left[\nabla K(u^{k+1})-\nabla K(u^k)\right];\\
0 &=& \nabla_v G(u^k,v^k)+\nabla H(v^{k})+B^{\top}q^k+\gamma B^{\top}B(v^{k+1}-v^k).
\end{eqnarray*}

Let us define
\begin{eqnarray*}
\xi_u&=&\nabla_u G(u^{k+1},v^{k+1})-\nabla_u G(u^k,v^k)+\left(\nabla\Omega(u^{k+1})-\nabla\Omega(u^{k})\right)^{\top}q^k+\left(\nabla\Theta(u^{k+1})\right)^{\top}(p^{k+1}-p^k)\nonumber\\
&&+\gamma\left(\nabla\Theta(u^{k+1})\right)^{\top}\left[\left(\Theta(u^{k+1})+Bv^{k+1}\right)-\left(\Theta(u^k)+Bv^k\right)\right]-\frac{1}{\epsilon^k}\left[\nabla K(u^{k+1})-\nabla K(u^k)\right] , \\
\xi_v&=&\nabla_v G(u^{k+1},v^{k+1})-\nabla_v G(u^k,v^k)+\nabla H(v^{k+1})-\nabla H(v^{k})+B^{\top}(p^{k+1}-p^k)\nonumber\\
&&+\gamma B^{\top}\left[\left(\Theta(u^{k+1})+Bv^{k+1}\right)-\left(\Theta(u^k)+Bv^k\right)\right]-\gamma B^{\top}B(v^{k+1}-v^k),
\end{eqnarray*}
and
\[
\xi_p=\frac{1}{\gamma}(p^{k+1}-p^k).
\]
Using the above relations, by straightforward verification we conclude that
\begin{eqnarray*}
&&\xi_u\in\partial_u\mathcal{L}_{\gamma}(u^{k+1},v^{k+1},p^{k+1});\\
&&\xi_v=\nabla_v\mathcal{L}_{\gamma}(u^{k+1},v^{k+1},p^{k+1});\\
&&\xi_p=\nabla_p\mathcal{L}_{\gamma}(u^{k+1},v^{k+1},p^{k+1}).
\end{eqnarray*}

By $(H_4)$ in Assumption~\ref{assump1} and the mean value theorem, we obtain that $\|\nabla\Theta(u)\|\leq L^0_{\Theta}$. Therefore, by Assumptions~\ref{assump1} and~\ref{assump2}, we have
\begin{eqnarray*}
\|\xi_u\| & \leq & \left( L_G+\|q^k\|L_{\Omega}+\gamma (L^0_{\Theta})^2+\frac{L_K}{\epsilon^k}\right)\|u^k-u^{k+1}\|+(L_G+\gamma\|B\|L^0_{\Theta})\|v^k-v^{k+1}\|+ L^0_{\Theta}\|p^k-p^{k+1}\|;\\
\|\xi_v\| & \leq & \left( L_G+\gamma\|B\| L^0_{\Theta}\right) \|u^k-u^{k+1}\|+(L_G+L_H)\|v^k-v^{k+1}\|+\|B\|\|p^k-p^{k+1}\|.
\end{eqnarray*}
Observe that $\xi=\left(\begin{array}{c}
\xi_u\\
\xi_v\\
\xi_p
\end{array}\right)$, $q^k=p^k+\gamma\left(\Theta(u^k)+Bv^k\right)$, and $\sigma \delta_k \le \epsilon^k \le \delta_k$,
and so we have
$$
\|\xi\|\leq h(u^k,p^k)\|w^k-w^{k+1}\|,
$$
where
\[
h(u^k,p^k) = \max\left\{2L_G+\|q^k\|L_{\Omega}+\gamma\|B\|L^0_{\Theta}+\gamma (L^0_{\Theta})^2+
\frac{L_K}{\sigma\delta_k},2L_G+L_H+\gamma\|B\|L^0_{\Theta}, L^0_{\Theta}+\|B\|+\frac{1}{\gamma}\right\}.
\]
Combining with $\xi\in\partial\mathcal{L}_{\gamma}(w^{k+1})$, the desired result follows.
\end{proof}

\section{Convergence of NAPP-AL} \label{convergence_FBS}

In this section, we shall establish a number of lemmas, which lead to a convergence result for NAPP-AL.
\begin{lemma}[bounding the dual variable] \label{lemma:p}
Suppose Assumptions~\ref{assump1} and~\ref{assump2} hold. Let the sequence $\{w^{k}\}$ be generated by NAPP-AL. Then
\begin{itemize}
\item[{\rm(i)}] $\|B^{\top}(p^k-p^{k+1})\|^2\geq\lambda_{\min}(B^{\top}B)\|p^k-p^{k+1}\|^2$
\item[{\rm(ii)}] $B^{\top}p^{k}=-\nabla_v G(u^{k-1},v^{k-1})-\nabla H(v^{k-1})+\gamma B^{\top}[\Theta(u^{k})-\Theta(u^{k-1})]$;
\item[{\rm(iii)}] $\|p^k-p^{k+1}\|^2\leq \frac{3\gamma^2\|B\|^2(L^0_{\Theta})^2}{\lambda_{\min}(B^{\top}B)}\|u^k-u^{k+1}\|^2
    +\frac{3(L_G+\gamma\|B\|L^0_{\Theta})^2}{\lambda_{\min}(B^{\top}B)}\|u^{k-1}-u^{k}\|^2+\frac{3(L_G+L_H)^2}{\lambda_{\min}(B^{\top}B)}\|v^{k-1}-v^{k}\|^2$.
\end{itemize}
\end{lemma}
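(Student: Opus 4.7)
My plan is to handle the three parts sequentially, since (iii) depends on both (i) and (ii), and (ii) is obtained from the optimality conditions already written down in the proof of Proposition above.

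For part (i), the key observation is that the dual update \eqref{eq:dual} yields $p^{k+1}-p^k = \gamma(\Theta(u^{k+1})+Bv^{k+1})$. By assumption $(H_6)$, $\Theta(u^{k+1})\in\mathrm{Im}(B)$, so $p^{k+1}-p^k\in\mathrm{Im}(B)$, i.e.\ there exists $z$ with $p^{k+1}-p^k=Bz$. Then $\|B^{\top}(p^{k+1}-p^k)\|^2 = z^{\top}(B^{\top}B)^2 z$ and $\|p^{k+1}-p^k\|^2 = z^{\top}(B^{\top}B)z$, and the spectral inequality $(B^{\top}B)^2\succeq \lambda_{\min}(B^{\top}B)\,(B^{\top}B)$ (which holds because both factors commute and the second factor is PSD) delivers the claim.

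For part (ii), I would start from the optimality condition of the $v$-subproblem, which was already written down in the preceding proof as
\[
0=\nabla_v G(u^k,v^k)+\nabla H(v^k)+B^{\top}q^k+\gamma B^{\top}B(v^{k+1}-v^k).
\]
Expanding $q^k=p^k+\gamma(\Theta(u^k)+Bv^k)$ and using the dual update $B^{\top}p^{k+1}=B^{\top}p^k+\gamma B^{\top}\Theta(u^{k+1})+\gamma B^{\top}Bv^{k+1}$ to eliminate $B^{\top}p^k+\gamma B^{\top}Bv^{k+1}$ yields $B^{\top}p^{k+1}=-\nabla_v G(u^k,v^k)-\nabla H(v^k)+\gamma B^{\top}[\Theta(u^{k+1})-\Theta(u^k)]$. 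A shift $k\mapsto k-1$ produces the identity claimed in (ii).

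For part (iii), I would subtract the identity in (ii) at indices $k+1$ and $k$, obtaining an expression for $B^{\top}(p^{k+1}-p^k)$ as a sum of differences of $\Theta$, $\nabla_v G$, and $\nabla H$. The only non-obvious bookkeeping step is to split the $\nabla_v G$ difference through the intermediate point $(u^{k-1},v^k)$ so that the Lipschitz bounds separate cleanly into one piece controlled by $\|u^k-u^{k-1}\|$ and one piece controlled by $\|v^k-v^{k-1}\|$; grouping these pieces together with the corresponding $\Theta$ and $\nabla H$ terms gives the decomposition
\[
B^{\top}(p^{k+1}-p^k)=a_1+a_2+a_3,
\]
where $\|a_1\|\le \gamma\|B\|L^0_{\Theta}\|u^{k+1}-u^k\|$, $\|a_2\|\le (L_G+\gamma\|B\|L^0_{\Theta})\|u^k-u^{k-1}\|$, and $\|a_3\|\le (L_G+L_H)\|v^k-v^{k-1}\|$. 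Applying $\|a_1+a_2+a_3\|^2\le 3(\|a_1\|^2+\|a_2\|^2+\|a_3\|^2)$ and then dividing by $\lambda_{\min}(B^{\top}B)$ via part (i) gives the stated bound.

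The main obstacle is the subtle point in part (i): the inequality $\|B^{\top}y\|^2\ge \lambda_{\min}(B^{\top}B)\|y\|^2$ is generally \emph{false} for arbitrary $y\in\RR^m$ when $B$ is strictly tall (since $BB^{\top}$ is singular in that case), so one must explicitly invoke assumption $(H_6)$ to confine $y=p^{k+1}-p^k$ to $\mathrm{Im}(B)$. Parts (ii) and (iii) are essentially bookkeeping once one chooses the correct intermediate point $(u^{k-1},v^k)$ to split the $\nabla_v G$ difference.
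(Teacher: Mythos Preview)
Your proposal is correct and follows essentially the same route as the paper: for (i) the paper also invokes $(H_6)$ to conclude $p^{k+1}-p^k\in\mathrm{Im}(B)$ and then cites the resulting spectral inequality as straightforward; for (ii) it uses exactly the optimality condition of \eqref{eq:primal_v} together with the dual update; and for (iii) it subtracts the two instances of (ii), bounds $\|B^\top(p^k-p^{k+1})\|$ by the same three-term sum you obtain, and then squares and applies (i). The only cosmetic difference is that the paper states the three-term bound directly without naming the intermediate point $(u^{k-1},v^k)$, but your explicit splitting yields the identical constants.
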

\begin{proof}
\begin{itemize}
\item[{\rm(i)}] Since ${\rm Im}(\Theta)\subseteq {\rm Im}(B)$ ((H$_6$) of Assumption~\ref{assump1}) and by~\eqref{eq:dual}, we have $\frac{1}{\gamma}(p^{k+1}-p^k)\in {\rm Im}(B)$. The result is straightforward. 
    (See also Lemma 3 in~\cite{LiuShenGu2019}).
\item[{\rm(ii)}] Since $v^{k+1}$ is optimal to~\eqref{eq:primal_v}, we have
\begin{equation}\label{eq:bound_dp1}
0=\nabla_v G(u^k,v^{k})+\nabla H(v^k)+B^{\top}p^k+\gamma B^{\top}[\Theta(u^k)+Bv^k]+\gamma B^{\top}B(v^{k+1}-v^{k}).
\end{equation}
By the dual updating formula of NAPP-AL~\eqref{eq:dual},
\begin{equation}\label{eq:bound_dp2}
p^{k+1}=p^k+\gamma[\Theta(u^{k+1})+Bv^{k+1}],
\end{equation}
and together~\eqref{eq:bound_dp1} and~\eqref{eq:bound_dp2}, we have
\begin{equation}\label{eq:bound_dp3}
B^{\top}p^{k+1}=-\nabla_v G(u^k,v^{k})-\nabla H(v^k)+\gamma B^{\top}[\Theta(u^{k+1})-\Theta(u^k)].
\end{equation}
\item[{\rm(iii)}] Using Statement (ii) of this lemma and~\eqref{eq:bound_dp3}, we have
\begin{eqnarray*}
\|B^{\top}(p^k-p^{k+1})\|
&\leq&\gamma\|B\| L^0_{\Theta}\|u^k-u^{k+1}\|+(L_G+\gamma\|B\| L^0_{\Theta})\|u^{k-1}-u^{k}\| \\
& & +(L_G+L_H)\|v^{k-1}-v^{k}\|.
\end{eqnarray*}
Combining with Statement (i), the desired inequality follows.
\end{itemize}
\end{proof}

\begin{lemma} 
\label{lemma_primal} Suppose that Assumptions~\ref{assump1} and~\ref{assump2} hold. 
Suppose that the sequence $\{w^{k}\}$ is generated by NAPP-AL. Then the following estimations hold true:
\begin{eqnarray*}
\mbox{{\rm (i)}}&&\mathcal{L}_{\gamma}(u^{k+1},v^{k+1},p^k)-\mathcal{L}_{\gamma}(u^{k},v^k,p^k)\\
&\leq&-\left(\frac{\beta}{2\epsilon^k}-\frac{L_G+\|q^k\|L_{\Omega}+\gamma (L^0_{\Theta})^2}{2}\right)\|u^k-u^{k+1}\|^2-\frac{\gamma\lambda_{\min}(B^{\top}B)-(L_G+L_H)}{2}\|v^k-v^{k+1}\|^2 \\
&&+\frac{1}{2\gamma}\left(\|p^{k-1}-p^k\|^2-\|p^k-p^{k+1}\|^2\right)\nonumber+\frac{1}{\gamma}\|p^k-p^{k+1}\|^2.\\
\mbox{{\rm (ii)}}&&\mathcal{L}_{\gamma}(u^{k+1},v^{k+1},p^{k+1})-\mathcal{L}_{\gamma}(u^{k+1},v^{k+1},p^k) = \frac{1}{\gamma}\|p^k-p^{k+1}\|^2.\\
\mbox{{\rm (iii)}}&&\mathcal{L}_{\gamma}(u^{k+1},v^{k+1},p^{k+1})-\mathcal{L}_{\gamma}(u^{k},v^k,p^k)\nonumber\\
&\leq&-\left(\frac{\beta}{2\epsilon^k}-\frac{L_G+\|q^k\|L_{\Omega}+\gamma (L^0_{\Theta})^2}{2}\right)\|u^k-u^{k+1}\|^2-\frac{\gamma\lambda_{\min}(B^{\top}B)-(L_G+L_H)}{2}\|v^k-v^{k+1}\|^2\\
&&+\frac{1}{2\gamma}\left(\|p^{k-1}-p^k\|^2-\|p^k-p^{k+1}\|^2\right)\nonumber\\
&&+\frac{6\gamma\|B\|^2(L^0_{\Theta})^2}{\lambda_{\min}(B^{\top}B)}\|u^k-u^{k+1}\|^2+\frac{6(L_G+\gamma\|B\|L^0_{\Theta})^2}{\gamma\lambda_{\min}(B^{\top}B)}\|u^{k-1}-u^{k}\|^2+\frac{6(L_G+L_H)^2}{\gamma\lambda_{\min}(B^{\top}B)}\|v^{k-1}-v^{k}\|^2.
\end{eqnarray*}
\end{lemma}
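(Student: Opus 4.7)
My plan is to dispatch part (ii) first, since it is a one-line computation; then to prove part (i), which contains the substantive analysis; and finally to obtain (iii) by summing (i) and (ii) and invoking Lemma~\ref{lemma:p}(iii). For (ii), since $(u^{k+1},v^{k+1})$ is held fixed, the $F$-part and the quadratic penalty of $\mathcal{L}_\gamma$ cancel between the two arguments, leaving $\mathcal{L}_\gamma(u^{k+1},v^{k+1},p^{k+1}) - \mathcal{L}_\gamma(u^{k+1},v^{k+1},p^k) = \langle p^{k+1}-p^k,\Theta(u^{k+1})+Bv^{k+1}\rangle$, which equals $\tfrac{1}{\gamma}\|p^{k+1}-p^k\|^2$ by the dual update~\eqref{eq:dual}.

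For (i), let $a^k := \Theta(u^k)+Bv^k$ and $d^k := a^{k+1}-a^k$. I would begin from the decomposition
\[ \mathcal{L}_\gamma(u^{k+1},v^{k+1},p^k)-\mathcal{L}_\gamma(u^k,v^k,p^k) = \bigl[F(u^{k+1},v^{k+1})-F(u^k,v^k)\bigr] + \langle p^k,d^k\rangle + \tfrac{\gamma}{2}\bigl(\|a^{k+1}\|^2-\|a^k\|^2\bigr), \]
bound the smooth part of $F$ by~\eqref{eq:Lip_G} and~\eqref{eq:Lip_H}, and keep $J(u^{k+1})-J(u^k)$ symbolic for later cancellation. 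The dual step~\eqref{eq:dual} gives $\gamma a^{k+1}=p^{k+1}-p^k$ and $\gamma a^k=p^k-p^{k-1}$, so the penalty change rewrites as $\tfrac{1}{2\gamma}(\|p^{k+1}-p^k\|^2-\|p^k-p^{k-1}\|^2)$, producing (up to sign) the telescoping $\tfrac{1}{2\gamma}(\|p^{k-1}-p^k\|^2-\|p^k-p^{k+1}\|^2)$ that appears in the target. Equivalently, expanding $\|a^{k+1}\|^2=\|a^k+d^k\|^2$ together with $q^k=p^k+\gamma a^k$ recasts the sum of the linear and penalty contributions as $\langle q^k,d^k\rangle + \tfrac{\gamma}{2}\|d^k\|^2$.

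The remaining first-order terms are then absorbed via the subproblem optimality conditions. Comparing $u^{k+1}$ to $u^k$ in~\eqref{eq:primal_u} and invoking the $\beta$-strong convexity of $K$ (so that $D(u^{k+1},u^k)\ge\tfrac{\beta}{2}\|u^{k+1}-u^k\|^2$) yields
\[ \langle\nabla_u G(u^k,v^k), u^{k+1}-u^k\rangle + J(u^{k+1})-J(u^k) + \langle q^k, \nabla\Omega(u^k)(u^{k+1}-u^k)+\Phi(u^{k+1})-\Phi(u^k)\rangle \le -\tfrac{\beta}{2\epsilon^k}\|u^{k+1}-u^k\|^2, \]
the first-order condition of the strongly convex quadratic~\eqref{eq:primal_v} yields the exact identity $\langle\nabla_v G(u^k,v^k)+\nabla H(v^k), v^{k+1}-v^k\rangle + \langle q^k, B(v^{k+1}-v^k)\rangle = -\gamma\|B(v^{k+1}-v^k)\|^2$, and~\eqref{eq:Lip_Omega} handles the linearization error in $\Omega$ at cost $\tfrac{\|q^k\|L_\Omega}{2}\|u^{k+1}-u^k\|^2$. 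The main obstacle, where I expect to spend most care, is the residual $\tfrac{\gamma}{2}\|d^k\|^2$: applying Young's inequality to $\|d^k\|^2=\|(\Theta(u^{k+1})-\Theta(u^k))+B(v^{k+1}-v^k)\|^2$ with $\|\Theta(u^{k+1})-\Theta(u^k)\|\le L^0_\Theta\|u^{k+1}-u^k\|$ splits it into a $u$-quadratic and a $v$-quadratic; the $v$-quadratic must be balanced against $-\gamma\|B(v^{k+1}-v^k)\|^2$ and then reduced via $\|B(\cdot)\|^2\ge\lambda_{\min}(B^\top B)\|\cdot\|^2$ to produce the advertised coefficient $-\tfrac{\gamma\lambda_{\min}(B^\top B)-(L_G+L_H)}{2}$, while the $u$-quadratic combines with $\tfrac{L_G}{2}$ and $\tfrac{\|q^k\|L_\Omega}{2}$ to yield the stated $u$-coefficient.

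Finally, (iii) is obtained by summing the bounds in (i) and (ii). The telescoping $\tfrac{1}{2\gamma}(\|p^{k-1}-p^k\|^2-\|p^k-p^{k+1}\|^2)$ is preserved, while the positive residual $\tfrac{2}{\gamma}\|p^k-p^{k+1}\|^2$ (the $\tfrac{1}{\gamma}\|p^k-p^{k+1}\|^2$ appearing in each of (i) and (ii)) is bounded by Lemma~\ref{lemma:p}(iii) multiplied by $2$, which produces the factor $6$ on each of the three trailing $u$- and $v$-quadratics and delivers the claimed inequality.
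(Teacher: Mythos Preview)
Your treatment of (ii) and the reduction of (iii) to (i)+(ii)+Lemma~\ref{lemma:p}(iii) match the paper and are fine. The gap is in part (i), precisely at the step you flag as ``the main obstacle'': your proposed Young bound on $\tfrac{\gamma}{2}\|d^k\|^2$ is too lossy to recover the stated inequality. Writing $\|d^k\|^2\le 2\|\Theta(u^{k+1})-\Theta(u^k)\|^2+2\|B(v^{k+1}-v^k)\|^2$ gives $\tfrac{\gamma}{2}\|d^k\|^2\le \gamma(L^0_\Theta)^2\|u^{k+1}-u^k\|^2+\gamma\|B(v^{k+1}-v^k)\|^2$, and the $v$-piece with coefficient $\gamma$ now exactly cancels the $-\gamma\|B(v^{k+1}-v^k)\|^2$ coming from the optimality of~\eqref{eq:primal_v}. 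You are then left with $+\tfrac{L_G+L_H}{2}\|v^{k+1}-v^k\|^2$, not the negative coefficient $-\tfrac{\gamma\lambda_{\min}(B^\top B)-(L_G+L_H)}{2}$ asserted in (i); and no $p$-terms appear at all, so neither the telescoping $\tfrac{1}{2\gamma}(\|p^{k-1}-p^k\|^2-\|p^k-p^{k+1}\|^2)$ nor the residual $\tfrac{1}{\gamma}\|p^k-p^{k+1}\|^2$ is accounted for. (Your remark that the penalty change ``rewrites'' as the telescoping term is a separate identity for $\tfrac{\gamma}{2}(\|a^{k+1}\|^2-\|a^k\|^2)$; once you pass to the equivalent form $\langle q^k,d^k\rangle+\tfrac{\gamma}{2}\|d^k\|^2$ that piece has already been absorbed and cannot be reused.)

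What the paper does instead is expand $\tfrac{\gamma}{2}\|d^k\|^2=\tfrac{\gamma}{2}\|\Theta(u^{k+1})-\Theta(u^k)\|^2+\tfrac{\gamma}{2}\|B(v^{k+1}-v^k)\|^2+\gamma\langle\Theta(u^{k+1})-\Theta(u^k),B(v^{k+1}-v^k)\rangle$ and handle the cross term by the polarization identity
\[
\gamma\langle\Theta(u^{k+1})-\Theta(u^k),B(v^{k+1}-v^k)\rangle=\tfrac{\gamma}{2}\bigl[\|a^{k+1}\|^2+\|a^k\|^2-\|\Theta(u^{k+1})+Bv^k\|^2-\|\Theta(u^k)+Bv^{k+1}\|^2\bigr]\le \tfrac{\gamma}{2}\|a^{k+1}\|^2+\tfrac{\gamma}{2}\|a^k\|^2,
\]
dropping the two nonpositive squares. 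Since $\gamma a^{k+1}=p^{k+1}-p^k$ and $\gamma a^k=p^k-p^{k-1}$, this upper bound is exactly $\tfrac{1}{2\gamma}\|p^{k+1}-p^k\|^2+\tfrac{1}{2\gamma}\|p^{k-1}-p^k\|^2=\tfrac{1}{\gamma}\|p^k-p^{k+1}\|^2+\tfrac{1}{2\gamma}(\|p^{k-1}-p^k\|^2-\|p^k-p^{k+1}\|^2)$, which is where both $p$-terms in (i) come from. Crucially, the $v$-contribution from $\tfrac{\gamma}{2}\|d^k\|^2$ now carries coefficient $\tfrac{\gamma}{2}$, so a net $-\tfrac{\gamma}{2}\|B(v^{k+1}-v^k)\|^2$ survives and, after $\|B\cdot\|^2\ge\lambda_{\min}(B^\top B)\|\cdot\|^2$, delivers the claimed negative $v$-coefficient. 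Replace your Young step by this cross-term estimate and the rest of your outline goes through.
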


\begin{proof}
{\rm (i).}
Since $u^{k+1}$ and $v^{k+1}$ are the solution of subproblems~\eqref{eq:primal_u} and~\eqref{eq:primal_v} respectively, we have
\begin{eqnarray} \label{eq:primal_bound1}
& & \langle\nabla_u G(u^k,v^k),u^{k+1}-u\rangle+J(u^{k+1})-J(u)+\langle q^k,\nabla\Omega(u^k)(u^{k+1}-u)+\Phi(u^{k+1})-\Phi(u)\rangle\nonumber\\
& & +\frac{1}{\epsilon^k}\left[D(u^{k+1},u^k)-D(u,u^k)\right] \leq 0,\,\,\, \forall u\in\mathbf{U},
\end{eqnarray}
and
\begin{equation}\label{eq:primal_bound2}
\langle\nabla_v G(u^k,v^k)+\nabla H(v^{k}), v^{k+1}-v\rangle+\langle q^k,B(v^{k+1}-v)\rangle+\gamma\langle B(v^{k+1}-v^k),B(v^{k+1}-v)\rangle\leq0, \,\,\, \forall v\in\RR^{d}.
\end{equation}
Take $u=u^k$ and $v=v^k$ in~\eqref{eq:primal_bound1} and~\eqref{eq:primal_bound2} respectively, we have
\begin{eqnarray}\label{eq:primal_bound3}
&&\langle\nabla_u G(u^k,v^k),u^{k+1}-u^k\rangle+J(u^{k+1})-J(u^k)+\langle q^k,\Theta(u^{k+1})-\Theta(u^k)\rangle\nonumber\\
&\leq&-\frac{1}{\epsilon^k}D(u^{k+1},u^k)+\langle q^k,\Omega(u^{k+1})-\Omega(u^k)-\nabla\Omega(u^k)(u^{k+1}-u^k)\rangle\nonumber\\
&\overset{\eqref{eq:Lip_Omega}}{\leq}&-\left(\frac{\beta}{2\epsilon^k}-\frac{\|q^k\|L_{\Omega}}{2}\right) \|u^k-u^{k+1}\|^2
\end{eqnarray}
and
\begin{eqnarray}\label{eq:primal_bound4}
&&\langle\nabla_v G(u^k,v^k), v^{k+1}-v^k\rangle+H(v^{k+1})-H(v^k)+\langle q^k,B(v^{k+1}-v^k)\rangle\nonumber\\
&\leq&-\gamma\|B(v^k-v^{k+1})\|^2+H(v^{k+1})-H(v^k)-\langle\nabla H(v^k),v^{k+1}-v^k\rangle\nonumber\\
& \overset{\eqref{eq:Lip_H}}{\leq} &-\left(\gamma\|B(v^k-v^{k+1})\|^2-\frac{L_H}{2}\|v^k-v^{k+1}\|^2\right). 
\end{eqnarray}
Using~\eqref{eq:primal_bound3} and~\eqref{eq:primal_bound4}, it follows that
\begin{eqnarray}\label{eq:primal_bound5}
&&\mathcal{L}_{\gamma}(u^{k+1},v^{k+1},p^k)-\mathcal{L}_{\gamma}(u^k,v^k,p^k)\nonumber\\
&=&\big{[}G(u^{k+1},v^{k+1})+J(u^{k+1})+H(v^{k+1})+\langle p^k,\Theta(u^{k+1})+Bv^{k+1}\rangle+\frac{\gamma}{2}\|\Theta(u^{k+1})+Bv^{k+1}\|^2\big{]}\nonumber\\
&&-\big{[}G(u^{k},v^{k})+J(u^k)+H(v^k)+\langle p^k,\Theta(u^k)+Bv^k\rangle+\frac{\gamma}{2}\|\Theta(u^k)+Bv^k\|^2\big{]}\nonumber\\
&=&\big{[}J(u^{k+1})-J(u^k)+\langle p^k,\Theta(u^{k+1})-\Theta(u^k)\rangle\big{]}+\big{[}H(v^{k+1})-H(v^k)+\langle p^k,B(v^{k+1}-v^k)\rangle\big{]}\nonumber\\
&&+\big{[}G(u^{k+1},v^{k+1})-G(u^k,v^k)+\frac{\gamma}{2}\|\Theta(u^{k+1})+Bv^{k+1}\|^2-\frac{\gamma}{2}\|\Theta(u^{k})+Bv^{k}\|^2\big{]}\nonumber\\
&=&\bigg{[}\langle\nabla_u G(u^k,v^k),u^{k+1}-u^k\rangle+J(u^{k+1})-J(u^k)+\langle q^k,\Theta(u^{k+1})-\Theta(u^k)\rangle\bigg{]}\nonumber\\
&&+\bigg{[}\langle\nabla_v G(u^k,v^k),v^{k+1}-v^k\rangle+H(v^{k+1})-H(v^k)+\langle q^k,B(v^{k+1}-v^k)\rangle\bigg{]}\nonumber\\
&&+\bigg{[}G(u^{k+1},v^{k+1})-G(u^{k},v^{k})-\langle\nabla_u G(u^k,v^k),u^{k+1}-u^k\rangle-\langle\nabla_v G(u^k,v^k), v^{k+1}-v^k\rangle\bigg{]}\nonumber\\
&&+\bigg{[}\frac{\gamma}{2}\|\Theta(u^{k+1})+Bv^{k+1}\|^2-\frac{\gamma}{2}\|\Theta(u^k)+Bv^{k}\|^2-\langle\gamma[\Theta(u^k)+Bv^k],\Theta(u^{k+1})-\Theta(u^k)+B(v^{k+1}-v^k)\rangle\bigg{]}\nonumber\\
&\leq&-\left(\frac{\beta}{2\epsilon^k}-\frac{L_G+\|q^k\|L_{\Omega}}{2}\right)\|u^k-u^{k+1}\|^2-\left(\gamma\|B(v^k-v^{k+1})\|^2-\frac{L_H+L_G}{2}\|v^k-v^{k+1}\|^2\right)\nonumber\\
&&+\left[\frac{\gamma}{2}\|\Theta(u^{k+1})+Bv^{k+1}\|^2-\frac{\gamma}{2}\|\Theta(u^k)+Bv^{k}\|^2-\langle\gamma [\Theta(u^k)+Bv^k],\Theta(u^{k+1})-\Theta(u^k)+B(v^{k+1}-v^k)\rangle
\right] \nonumber \\
\end{eqnarray}
where the last inequality is due to \eqref{eq:primal_bound3},~\eqref{eq:primal_bound4} and~\eqref{eq:Lip_G}.
Observe that the last term of the right hand side of~\eqref{eq:primal_bound5} can be written as
\begin{eqnarray*}
&&\frac{\gamma}{2}\|\Theta(u^{k+1})+Bv^{k+1}\|^2-\frac{\gamma}{2}\|\Theta(u^k)+Bv^{k}\|^2-\langle\gamma[\Theta(u^k)+Bv^k],\Theta(u^{k+1})-\Theta(u^k)+B(v^{k+1}-v^k)\rangle\nonumber\\
&=&\frac{\gamma}{2}\|\Theta(u^{k+1})+Bv^{k+1}\|^2+\frac{\gamma}{2}\|\Theta(u^k)+Bv^{k}\|^2-\frac{\gamma}{2}\|\Theta(u^{k+1})+Bv^{k}\|^2-\frac{\gamma}{2}\|\Theta(u^{k})+Bv^{k+1}\|^2\nonumber\\
&&+\left[\frac{\gamma}{2}\|\Theta(u^{k+1})+Bv^{k}\|^2-\frac{\gamma}{2}\|\Theta(u^k)+Bv^{k}\|^2-\langle\gamma\left(\Theta(u^k)+Bv^k\right),
\Theta(u^{k+1})-\Theta(u^k)\rangle\right] \nonumber\\
&&+\left[\frac{\gamma}{2}\|\Theta(u^{k})+Bv^{k+1}\|^2-\frac{\gamma}{2}\|\Theta(u^k)+Bv^{k}\|^2-\langle\gamma\left(\Theta(u^k)+Bv^k\right),B(v^{k+1}-v^k)\rangle\right] \nonumber\\
&=&{\gamma}\|\Theta(u^{k+1})+Bv^{k+1}\|^2+\frac{\gamma}{2}\left(\|\Theta(u^k)+Bv^{k}\|^2-\|\Theta(u^{k+1})+Bv^{k+1}\|^2\right)\nonumber\\
&&+\bigg{[}\frac{\gamma}{2}\|\Theta(u^{k+1})+Bv^{k}\|^2-\frac{\gamma}{2}\|\Theta(u^k)+Bv^{k}\|^2-\langle\gamma
\left(\Theta(u^k)+Bv^k\right),\Theta(u^{k+1})-\Theta(u^k)\rangle\bigg{]}\nonumber\\
&&+\left[\frac{\gamma}{2}\|\Theta(u^{k})+Bv^{k+1}\|^2-\frac{\gamma}{2}\|\Theta(u^k)+Bv^{k}\|^2
-\langle\gamma\left(\Theta(u^k)+Bv^k\right),B(v^{k+1}-v^k)\rangle\right] , \nonumber\\
\end{eqnarray*}
which can be further upper bounded by
\begin{eqnarray}\label{eq:primal_bound7}
&\leq&{\gamma}\|\Theta(u^{k+1})+Bv^{k+1}\|^2+\frac{\gamma}{2}\left(\|\Theta(u^k)+Bv^{k}\|^2-\|\Theta(u^{k+1})+Bv^{k+1}\|^2\right) \nonumber\\
&&+\frac{\gamma (L^0_{\Theta})^2}{2}\|u^k-u^{k+1}\|^2+\frac{\gamma}{2}\|B(v^k-v^{k+1})\|^2\nonumber\\
&=&\frac{1}{\gamma}\|p^{k}-p^{k+1}\|^2+\frac{1}{2\gamma} \left(\|p^{k-1}-p^k\|^2-\|p^k-p^{k+1}\|^2\right) \nonumber\\
&&+\frac{\gamma (L^0_{\Theta})^2}{2}\|u^k-u^{k+1}\|^2+\frac{\gamma}{2}\|B(v^k-v^{k+1})\|^2
\end{eqnarray}
where the first inequality is due to the gradient Lipschitz for the functions $\frac{\gamma}{2}\|\theta+Bv^k\|^2$ and $\frac{\gamma}{2}\|\Theta(u^k)+Bv\|^2$, and
the second equality is due to the identity $p^{k+1}-p^k=\gamma[\Theta(u^{k+1})+Bv^{k+1}]$.

Combining \eqref{eq:primal_bound5} with~\eqref{eq:primal_bound7} and full column rank of $B$ in (H$_6$) of Assumption~\ref{assump1}, the desired result follows.

{\rm (ii).}
This statement easily follows by observing that
$$
\mathcal{L}_{\gamma}(u^{k+1},v^{k+1},p^{k+1})-\mathcal{L}_{\gamma}(u^{k+1},v^{k+1},p^{k})=\langle p^{k+1}-p^k,\Theta(u^{k+1})+v^{k+1}\rangle=\frac{1}{\gamma}\|p^{k}-p^{k+1}\|^2.
$$

{\rm (iii).}
Summing Statements (i) and (ii), we obtain
\begin{eqnarray*}
&&\mathcal{L}_{\gamma}(u^{k+1},v^{k+1},p^{k+1})-\mathcal{L}_{\gamma}(u^{k},v^k,p^k) \nonumber \\
&\leq&-\left(\frac{\beta}{2\epsilon^k}-\frac{L_G+\|q^k\|L_{\Omega}+\gamma (L^0_{\Theta})^2}{2} \right) \|u^k-u^{k+1}\|^2-\frac{\gamma\lambda_{\min}(B^{\top}B)-(L_G+L_H)}{2}\|v^k-v^{k+1}\|^2\\
&&+\frac{1}{2\gamma}\left(\|p^{k-1}-p^k\|^2-\|p^k-p^{k+1}\|^2\right)+\frac{2}{\gamma}\|p^{k}-p^{k+1}\|^2.
\end{eqnarray*}
By combining with Statement (iii) of Lemma~\ref{lemma:p}, we have the desired result.
\end{proof}
We construct a sequence $\{\Lambda^k\}$ defined as
\begin{equation} \label{Lambda-def}
\Lambda^k := \mathcal{L}_{\gamma}(u^k,v^k,p^k)+c_1 \|u^{k-1}-u^k\|^2+c_2 \|v^{k-1}-v^{k}\|^2+\frac{1}{2\gamma}\|p^{k-1}-p^k\|^2,
\end{equation}
where
\[
c_1:=\frac{7(L_G+\gamma\|B\|L^0_{\Theta})^2}{\gamma\lambda_{\min}(B^{\top}B)}  \mbox{ and } c_2:=\frac{7(L_G+L_H)^2}{\gamma\lambda_{\min}(B^{\top}B)}.
\]
Noted that $\Lambda^k$ is well defined according to the algorithm; it is composed of a combination of the augmented Lagrangian $\mathcal{L}_{\gamma}(w^k)$ and the primal residual ($\|u^{k-1}-u^k\|$ and $\|v^{k-1}-v^k\|$) and the dual residual $\|p^{k-1}-p^k\|$.
The sequence $\Lambda^k$ plays the role of a potential value for the iterates produced by the algorithm. In the next few lemmas we shall establish some important properties of this sequence.
\begin{lemma} 
\label{lemma_varianceL} Under Assumptions~\ref{assump1} and~\ref{assump2}, and set $\delta^k$ as in \eqref{delta-k}. Suppose that the sequence $\{w^{k}\}$ is generated by the NAPP-AL. Then,
\begin{equation} \label{w-and-Lambda}
\Lambda^{k+1}-\Lambda^{k}\leq -c_3\|w^k-w^{k+1}\|^2,
\end{equation}
with
\[
c_3 := \min\left\{\frac{1}{2},c_4,\frac{1}{3\gamma}\right\} \mbox{ where }
c_4 := \frac{\gamma\lambda_{\min}(B^{\top}B)-(L_G+L_H)}{2}-c_2 \, (>0).
\]
\end{lemma}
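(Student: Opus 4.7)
The plan is to combine the single-step descent of Lemma~\ref{lemma_primal}(iii) with the telescoping quadratic corrections defining $\Lambda^k$, and then to manufacture the missing $-\tfrac{1}{3\gamma}\|p^k-p^{k+1}\|^2$ term on the right-hand side by invoking Lemma~\ref{lemma:p}(iii).

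First, starting from the definition \eqref{Lambda-def}, I would write $\Lambda^{k+1}-\Lambda^k$ as $\mathcal{L}_\gamma(w^{k+1})-\mathcal{L}_\gamma(w^k)$ plus three telescoping differences in $\|u^{k-1}-u^k\|^2$, $\|v^{k-1}-v^k\|^2$, and $\|p^{k-1}-p^k\|^2$, and substitute the bound from Lemma~\ref{lemma_primal}(iii). The two copies of $\tfrac{1}{2\gamma}(\|p^{k-1}-p^k\|^2-\|p^k-p^{k+1}\|^2)$ (one from \eqref{Lambda-def}, one from Lemma~\ref{lemma_primal}(iii)) cancel exactly. Because $c_1$ and $c_2$ are chosen as $7$ times the quantities that appear with factor $6$ in Lemma~\ref{lemma_primal}(iii), the net coefficient of $\|u^{k-1}-u^k\|^2$ becomes $-\mu_1:=-(L_G+\gamma\|B\|L^0_\Theta)^2/(\gamma\lambda_{\min}(B^\top B))$ and that of $\|v^{k-1}-v^k\|^2$ becomes $-\mu_2:=-(L_G+L_H)^2/(\gamma\lambda_{\min}(B^\top B))$, while the coefficients of $\|u^k-u^{k+1}\|^2$ and $\|v^k-v^{k+1}\|^2$ are $-A_k$ and $-c_4$ respectively, with $A_k:=\tfrac{\beta}{2\epsilon^k}-\tfrac{L_G+\|q^k\|L_\Omega+\gamma(L^0_\Theta)^2}{2}-\tfrac{6\gamma\|B\|^2(L^0_\Theta)^2}{\lambda_{\min}(B^\top B)}-c_1$.

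Next, I would convert the lagged residuals into a dual-residual penalty. Dividing Lemma~\ref{lemma:p}(iii) by $3\gamma$, the coefficients of $\|u^{k-1}-u^k\|^2$ and $\|v^{k-1}-v^k\|^2$ on the right-hand side become exactly $\mu_1$ and $\mu_2$, while that of $\|u^k-u^{k+1}\|^2$ becomes $\mu_0:=\gamma\|B\|^2(L^0_\Theta)^2/\lambda_{\min}(B^\top B)$. Rearranging yields $-\mu_0\|u^k-u^{k+1}\|^2-\mu_1\|u^{k-1}-u^k\|^2-\mu_2\|v^{k-1}-v^k\|^2\le-\tfrac{1}{3\gamma}\|p^k-p^{k+1}\|^2$, which, added to the previous display, gives
\[
\Lambda^{k+1}-\Lambda^k\le -(A_k-\mu_0)\|u^k-u^{k+1}\|^2-c_4\|v^k-v^{k+1}\|^2-\tfrac{1}{3\gamma}\|p^k-p^{k+1}\|^2.
\]

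Finally, I would verify $A_k-\mu_0\ge\tfrac{1}{2}$. Using $\epsilon^k\le\delta_k$ together with the explicit formula \eqref{delta-k} gives $\tfrac{\beta}{2\epsilon^k}\ge\tfrac{\beta}{2\delta_k}=\tfrac{L_G+\|q^k\|L_\Omega+\gamma(L^0_\Theta)^2}{2}+\tfrac{7\gamma\|B\|^2(L^0_\Theta)^2}{\lambda_{\min}(B^\top B)}+c_1+\tfrac{1}{2}$, from which the required bound follows by direct subtraction. Taking $c_3=\min\{\tfrac{1}{2},c_4,\tfrac{1}{3\gamma}\}$ then completes the argument, provided $c_4>0$; but this is exactly the content of Assumption~\ref{assump2}(ii), since $c_4>0$ is equivalent to $\gamma\lambda_{\min}(B^\top B)(\gamma\lambda_{\min}(B^\top B)-(L_G+L_H))>14(L_G+L_H)^2$, whose positive root is $\tfrac{1+\sqrt{57}}{2}(L_G+L_H)$. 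The hard part is really only the careful bookkeeping of these constants: the ``magic'' factors $7$ in $c_1,c_2$ and the $6+1$ split between Lemma~\ref{lemma_primal}(iii) and the target $\tfrac{1}{3\gamma}\|p^k-p^{k+1}\|^2$ are dictated precisely by matching the lagged residuals appearing in Lemma~\ref{lemma_primal}(iii) and Lemma~\ref{lemma:p}(iii).
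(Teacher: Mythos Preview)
Your proposal is correct and follows essentially the same route as the paper: substitute Lemma~\ref{lemma_primal}(iii) into the definition of $\Lambda^k$, let the $p$-telescoping terms cancel, and then use Lemma~\ref{lemma:p}(iii) (scaled by $1/(3\gamma)$) to trade the lagged residuals $-\mu_1\|u^{k-1}-u^k\|^2-\mu_2\|v^{k-1}-v^k\|^2$ for the desired $-\tfrac{1}{3\gamma}\|p^k-p^{k+1}\|^2$. Your quantity $A_k-\mu_0$ is exactly the paper's $\tau_k$, and your verifications that $A_k-\mu_0\ge\tfrac12$ via \eqref{delta-k} and that $c_4>0$ via Assumption~\ref{assump2}(ii) match the paper's computations line for line.
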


\begin{proof}

By the definition of $\Lambda^k$, we have
\begin{eqnarray}\label{eq:vLambda_bound1}
\Lambda^{k+1}-\Lambda^k&=&\mathcal{L}_{\gamma}(u^{k+1},v^{k+1},p^{k+1})-\mathcal{L}_{\gamma}(u^{k},v^{k},p^{k})\nonumber\\
&&+c_1(\|u^{k}-u^{k+1}\|^2-\|u^{k-1}-u^k\|^2)+c_2(\|v^{k}-v^{k+1}\|^2-\|v^{k-1}-v^k\|^2)\nonumber\\
&&+\frac{1}{2\gamma}\left(\|p^k-p^{k+1}\|^2-\|p^{k-1}-p^{k}\|^2\right).
\end{eqnarray}
Combining Lemma~\ref{lemma_primal} and \eqref{eq:vLambda_bound1} yields
\begin{eqnarray}\label{eq:vLambda_bound2}
& & \Lambda^{k+1}-\Lambda^k \nonumber \\
& \leq &-\left(\frac{\beta}{2\epsilon^k}-\frac{L_G+\|q^k\|L_{\Omega}+\gamma (L^0_{\Theta})^2}{2}\right)
\|u^k-u^{k+1}\|^2-\frac{\gamma\lambda_{\min}(B^{\top}B)-(L_G+L_H)}{2}\|v^k-v^{k+1}\|^2\nonumber\\
&&+\frac{6\gamma\|B\|^2(L^0_{\Theta})^2}{\lambda_{\min}(B^{\top}B)}\|u^{k}-u^{k+1}\|^2+\frac{6(L_G+\gamma\|B\| L^0_{\Theta})^2}{\gamma\lambda_{\min}(B^{\top}B)}\|u^{k-1}-u^k\|^2+\frac{6(L_G+L_H)^2}{\gamma\lambda_{\min}(B^{\top}B)}\|v^{k-1}-v^k\|^2\nonumber\\
&& + c_1(\|u^{k}-u^{k+1}\|^2-\|u^{k-1}-u^k\|^2) + c_2(\|v^{k}-v^{k+1}\|^2-\|v^{k-1}-v^k\|^2) \nonumber \\
&=&-\underbrace{\left[\frac{\beta}{2\epsilon^k}-\frac{L_G+\|q^k\|L_{\Omega}+\gamma (L^0_{\Theta})^2}{2}-\frac{7\gamma\|B\|^2(L^0_{\Theta})^2}{\lambda_{\min}(B^{\top}B)} - c_1\right]}_{\tau_k}\|u^k-u^{k+1}\|^2 \nonumber \\
& & -\underbrace{\left[\frac{\gamma\lambda_{\min}(B^{\top}B)-(L_G+L_H)}{2}-c_2\right]}_{c_4}\|v^k-v^{k+1}\|^2
 \nonumber \\
&&-\frac{1}{3\gamma}\left[\frac{3\gamma^2\|B\|^2(L^0_{\Theta})^2}{\lambda_{\min}(B^{\top}B)}\|u^k-u^{k+1}\|^2+\frac{3(L_G+\gamma\|B\|L^0_{\Theta})^{2}}{\lambda_{\min}(B^{\top}B)}\|u^{k-1}-u^k\|^2+\frac{3(L_G+L_H)^2}{\lambda_{\min}(B^{\top}B)}\|v^{k-1}-v^k\|^2 \right]. \nonumber \\
\end{eqnarray}
Since $0<\epsilon^k\leq\frac{\beta}{L_G+\|q^k\|L_{\Omega}+\gamma (L^0_{\Theta})^2+\frac{14\gamma\|B\|^2(L^0_{\Theta})^2}{\lambda_{\min}(B^{\top}B)}+\frac{14(L_G+\gamma\|B\| L^0_{\Theta})^2}{\gamma\lambda_{\min}(B^{\top}B)}+1}$ and
$c_1=\frac{7(L_G+\gamma\|B\|L^0_{\Theta})^2}{\gamma\lambda_{\min}(B^{\top}B)}$, we have
\[
\tau_k:=\frac{\beta}{2\epsilon^k}-\frac{L_G+\|q^k\|L_{\Omega}+\gamma (L^0_{\Theta})^2}{2}-\frac{7\gamma\|B\|^2(L^0_{\Theta})^2}{\lambda_{\min}(B^{\top}B)} - c_1 \geq \frac{1}{2}.
\]
Now, as $\gamma>\frac{\sqrt{57}+1}{2\lambda_{\min}(B^{\top}B)}(L_G+L_H)$ (Assumption~\ref{assump2}) and $c_2=\frac{7(L_G+L_H)^2}{\gamma\lambda_{\min}(B^{\top}B)}$,
we also have
\begin{eqnarray}
c_4 &:=&\frac{\gamma\lambda_{\min}(B^{\top}B)-(L_G+L_H)}{2}-c_2\nonumber\\
&=&\frac{[\gamma\lambda_{\min}(B^{\top}B)]^2-(L_G+L_H)\gamma\lambda_{\min}(B^{\top}B)-14(L_G+L_H)^2}{2\gamma\lambda_{\min}(B^{\top}B)}>0.
\end{eqnarray}
Therefore, by Statement (ii) of Lemma~\ref{lemma:p}, inequality~\eqref{eq:vLambda_bound2} yields
\begin{eqnarray}
\Lambda^{k+1}-\Lambda^k\leq-\tau_k \|u^k-u^{k+1}\|^2-c_4 \|v^k-v^{k+1}\|^2-\frac{1}{3\gamma}\|p^k-p^{k+1}\|^2,
\end{eqnarray}
and
\begin{eqnarray}
\Lambda^{k+1}-\Lambda^k\leq-c_3\|w^k-w^{k+1}\|^2,
\end{eqnarray}
with $c_3=\min\left\{\frac{1}{2},c_4,\frac{1}{3\gamma}\right\}$, the claim is proven.
\end{proof}

\begin{theorem}[convergence]\label{theo:convergence}
Under Assumptions \ref{assump1} and \ref{assump2}, and set $\delta^k$ as in \eqref{delta-k}. Suppose that the sequence $\{w^k\}$ is generated by NAPP-AL. Then
\begin{itemize}
\item [{\rm(i)}] $\Lambda^k$ is lower bounded and $\lim\limits_{k\rightarrow\infty}\Lambda^k=\Lambda^*$;
\item [{\rm(ii)}] $\{w^k\}$ is bounded and $\|w^k-w^{k+1}\|\rightarrow 0$;
\item [{\rm(iii)}] The sequences $\{\mathcal{L}_{\gamma}(w^k)\}$ and $\{F(u^k,v^k)\}$ converges to $\Lambda^*$, and $\lim\limits_{k\rightarrow\infty}\Theta(u^k)+Bv^k=0$;
\item [{\rm(iv)}] There exists $c_5>0$ with ${\rm dist}\left(0,\partial\mathcal{L}_{\gamma}(u^k)\right) \leq c_5 \|w^k-w^{k+1}\|$
(hence ${\rm dist}\left(0,\partial\mathcal{L}_{\gamma}(u^k)\right) \rightarrow 0$);
\item [{\rm(v)}] Any cluster point $\bar{w}$ of $\{w^k\}$ is a stationary point of (P) and $\mathcal{L}_{\gamma}(\bar{w})=\Lambda^*$.
\end{itemize}
\end{theorem}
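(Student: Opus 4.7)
The plan is to derive the five conclusions in sequence, each leaning on the previous one together with the two workhorses already in hand: the descent inequality $\Lambda^{k+1}-\Lambda^k\le -c_3\|w^k-w^{k+1}\|^2$ from Lemma~\ref{lemma_varianceL} and the dual/feasibility identities from Lemma~\ref{lemma:p}, plus the coercivity assumption~(H$_7$). For (i), monotonicity of $\{\Lambda^k\}$ is immediate from \eqref{w-and-Lambda}, so the real task is a uniform lower bound. I would use the dual update \eqref{eq:dual} in the form $\Theta(u^k)+Bv^k=\tfrac{1}{\gamma}(p^k-p^{k-1})$ to rewrite $\mathcal{L}_\gamma(w^k)=F(u^k,v^k)+\tfrac{1}{\gamma}\langle p^k,p^k-p^{k-1}\rangle+\tfrac{1}{2\gamma}\|p^k-p^{k-1}\|^2$. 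The only troublesome piece is the cross term. Lemma~\ref{lemma:p}(i)–(ii) (invoking full column rank of $B$ and ${\rm Im}(\Theta)\subseteq{\rm Im}(B)$) bounds $\|p^k\|^2$ by a constant multiple of $\|\nabla_v G(u^{k-1},v^{k-1})\|^2+\|\nabla H(v^{k-1})\|^2+\|u^k-u^{k-1}\|^2$. After a Young-inequality split of the cross term, the resulting quadratic perturbations in $\|u^{k-1}-u^k\|^2$ and $\|v^{k-1}-v^k\|^2$ can be absorbed into the $c_1$ and $c_2$ contributions already present in \eqref{Lambda-def}, leaving an estimate of the form $\Lambda^k\ge F(u^k,v^k)+\kappa\|\Theta(u^k)+Bv^k\|^2-C$. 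Coercivity of $F$ on the feasible set then gives a uniform lower bound, and monotone convergence yields $\Lambda^k\downarrow\Lambda^*$.

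Statement (ii) follows by telescoping \eqref{w-and-Lambda}: $c_3\sum_{k\ge 0}\|w^k-w^{k+1}\|^2\le\Lambda^0-\Lambda^*<\infty$, so $\|w^k-w^{k+1}\|\to 0$. The same lower-bound manipulation from (i) shows $F(u^k,v^k)+\kappa\|\Theta(u^k)+Bv^k\|^2\le\Lambda^0+C$; coercivity of $F$ along the feasible set then traps $\{(u^k,v^k)\}$ in a bounded set, after which Lemma~\ref{lemma:p}(ii) together with full column rank of $B$ delivers boundedness of $\{p^k\}$. For (iii), $\|p^k-p^{k+1}\|\to 0$ and the identity $p^{k+1}-p^k=\gamma(\Theta(u^{k+1})+Bv^{k+1})$ immediately give $\Theta(u^k)+Bv^k\to 0$. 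Since $\mathcal{L}_\gamma(w^k)=\Lambda^k-c_1\|u^{k-1}-u^k\|^2-c_2\|v^{k-1}-v^k\|^2-\tfrac{1}{2\gamma}\|p^{k-1}-p^k\|^2$, all three deviation terms vanish and $\mathcal{L}_\gamma(w^k)\to\Lambda^*$; and $F(u^k,v^k)=\mathcal{L}_\gamma(w^k)-\langle p^k,\Theta(u^k)+Bv^k\rangle-\tfrac{\gamma}{2}\|\Theta(u^k)+Bv^k\|^2$, where the two remaining terms vanish by boundedness of $\{p^k\}$ and the vanishing residual.

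For (iv), Proposition~\ref{prop} already supplies ${\rm dist}(0,\partial\mathcal{L}_\gamma(w^{k+1}))\le h(u^k,p^k)\|w^k-w^{k+1}\|$. By (ii) the sequences $\{u^k\},\{p^k\}$ are bounded, hence so is $\{\|q^k\|\}$, and inspection of \eqref{delta-k} shows $1/\delta_k$ (and hence $1/(\sigma\delta_k)$) is uniformly bounded. Therefore $h(u^k,p^k)\le c_5$ for some constant, and the product tends to zero. For (v), any cluster point $\bar w$ of the bounded sequence $\{w^k\}$ is approached along a subsequence $w^{k_j}\to\bar w$. Using outer semicontinuity of the limiting subdifferential—which, thanks to continuous differentiability of $G,H,\Theta$ and local Lipschitz continuity of $J$ on ${\rm dom}(J)\cap\mathbf{U}$ from (H$_3$), applies to $\partial\mathcal{L}_\gamma$—together with the subgradient bound $\xi^{k_j+1}\to 0$ obtained in (iv) and the value convergence $\mathcal{L}_\gamma(w^{k_j+1})\to\Lambda^*$ from (iii), one passes to the limit to conclude $0\in\partial\mathcal{L}_\gamma(\bar w)$ and $\mathcal{L}_\gamma(\bar w)=\Lambda^*$.

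The main obstacle will be the lower bound in (i): disentangling $\|p^k\|^2$ from $\Lambda^k$ is delicate because $p^k$ enters both linearly (through the Lagrangian coupling) and quadratically (through the residual-squared term), and the bookkeeping has to be tuned so that the induced $\|u^{k-1}-u^k\|^2$ perturbations are genuinely absorbable by the existing $c_1$-term — this is precisely why $c_1,c_2$ were chosen with the particular coefficients in Lemma~\ref{lemma_varianceL}. Once (i) is in place the remaining items are essentially standard telescoping, continuity, and outer-semicontinuity arguments.
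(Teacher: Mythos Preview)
Your outline for parts (iii)--(v) matches the paper's argument, but the crucial step~(i) has a genuine gap, and it propagates into (ii).

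You propose to bound the cross term $\tfrac{1}{\gamma}\langle p^k,p^k-p^{k-1}\rangle$ by first controlling $\|p^k\|^2$ via Lemma~\ref{lemma:p}(ii), which yields a multiple of $\|\nabla_v G(u^{k-1},v^{k-1})\|^2+\|\nabla H(v^{k-1})\|^2+\|u^{k-1}-u^k\|^2$, and then to ``absorb the resulting quadratic perturbations in $\|u^{k-1}-u^k\|^2$ and $\|v^{k-1}-v^k\|^2$'' into the $c_1,c_2$ terms. But the gradient norms $\|\nabla_v G(u^{k-1},v^{k-1})\|^2$ and $\|\nabla H(v^{k-1})\|^2$ are \emph{not} increment terms and are not a priori bounded, so they cannot be absorbed into $c_1\|u^{k-1}-u^k\|^2+c_2\|v^{k-1}-v^k\|^2$ nor hidden in a constant $C$. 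Even if one grants the estimate $\Lambda^k\ge F(u^k,v^k)+\kappa\|\Theta(u^k)+Bv^k\|^2-C$, assumption~(H$_7$) gives lower boundedness and coercivity of $F$ only on the feasible set $\{\Theta(u)+Bv=0\}$; since $(u^k,v^k)$ is infeasible, neither a lower bound for $\Lambda^k$ nor boundedness of $\{(u^k,v^k)\}$ follows.

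The paper's remedy is to introduce a feasible companion $\tilde v^k$ with $B\tilde v^k=-\Theta(u^k)$ (using ${\rm Im}(\Theta)\subseteq{\rm Im}(B)$), so that the linear coupling becomes $\langle p^k,\Theta(u^k)+Bv^k\rangle=\langle B^\top p^k,\,v^k-\tilde v^k\rangle$. Substituting the expression for $B^\top p^k$ from Lemma~\ref{lemma:p}(ii) and adding--subtracting the gradients at $(u^k,v^k)$ produces the term $-\langle\nabla_v G(u^k,v^k)+\nabla H(v^k),\,v^k-\tilde v^k\rangle$ plus residuals controlled by $\|u^{k-1}-u^k\|$ and $\|v^{k-1}-v^k\|$ (these are the pieces genuinely absorbable by $c_1,c_2$). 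The point is that this linear gradient term, together with $F(u^k,v^k)$ and a quadratic $\tfrac{L_G+L_H}{2}\|v^k-\tilde v^k\|^2$, is bounded below by $F(u^k,\tilde v^k)$ via the descent inequalities \eqref{eq:Lip_G_v}--\eqref{eq:Lip_H}. One arrives at $\Lambda^k\ge F(u^k,\tilde v^k)+\text{(positive multiple of }\|\Theta(u^k)+Bv^k\|^2)$, and \emph{now} (H$_7$) applies because $(u^k,\tilde v^k)$ is feasible. This simultaneously gives the lower bound in~(i) and, by coercivity, boundedness of $\{(u^k,\tilde v^k)\}$ and hence of $\{v^k\}$ in~(ii). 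Your plan is missing exactly this ``feasible companion + descent lemma'' maneuver.
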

\begin{proof}

{\rm (i).}
Since ${\rm Im}(\Theta)\subseteq {\rm Im}(B)$ in Assumption~\ref{assump1}, let $B\tilde{v}^k=-\Theta(u^k)$. Hence $(u^k,\tilde{v}^k)$ is feasible, and
\begin{equation}\label{eq:Lgamma}
\mathcal{L}_{\gamma}(u^k,v^k,p^k)=F(u^k,v^k)+\langle p^k,B(v^k-\tilde{v}^k)\rangle+\frac{\gamma}{2}\|B(v^k-\tilde{v}^k)\|^2.
\end{equation}
Using Lemma~\ref{lemma:p}, we have
\begin{eqnarray}\label{eq:pv}
&&\langle B^{\top}p^k,v^k-\tilde{v}^{k}\rangle\nonumber\\
&=&\langle-\nabla_v G(u^{k-1},v^{k-1})-\nabla H(v^{k-1})+\gamma B^{\top}[\Theta(u^k)-\Theta(u^{k-1})],v^k-\tilde{v}^k\rangle\nonumber\\
&=&\langle\nabla_v G(u^k,v^{k})-\nabla_v G(u^{k-1},v^{k-1})+\nabla H(v^{k})-\nabla H(v^{k-1})+\gamma B^{\top} [\Theta(u^k)-\Theta(u^{k-1})],v^k-\tilde{v}^k\rangle\nonumber\\
&&-\langle\nabla_v G(u^k,v^k)+\nabla H(v^k),v^k-\tilde{v}^k\rangle\nonumber\\
&\geq&-\left[ (L_G+ \gamma\|B\|L^0_{\Theta})\|u^{k-1}-u^{k}\|+(L_G+L_H)\|v^{k-1}-v^k\|\right] \cdot \|v^k-\tilde{v}^k\| \nonumber \\
&&-\langle \nabla_v G(u^k,v^k)+\nabla H(v^k),v^k-\tilde{v}^k\rangle\nonumber\\
&\geq&-\frac{7(L_G+\gamma\|B\| L^0_{\Theta})^2}{\gamma\lambda_{\min}(B^{\top}B)}\|u^{k-1}-u^k\|^2-\frac{7(L_G+L_H)^2}{\gamma\lambda_{\min}(B^{\top}B)}\|v^{k-1}-v^k\|^2-\frac{\gamma\lambda_{\min}(B^{\top}B)}{7}\|v^k-\tilde{v}^k\|^2\nonumber\\
&&-\langle\nabla_v G(u^k,v^k)+\nabla H(v^k),v^k-\tilde{v}^k\rangle\nonumber\\
&\geq&-\frac{7(L_G+\gamma\|B\| L^0_{\Theta})^2}{\gamma\lambda_{\min}(B^{\top}B)}\|u^{k-1}-u^k\|^2-\frac{7(L_G+L_H)^2}{\gamma\lambda_{\min}(B^{\top}B)}\|v^{k-1}-v^k\|^2-\frac{\gamma}{7}\|B(v^k-\tilde{v}^k)\|^2\nonumber\\
&&-\langle\nabla_v G(u^k,v^k)+\nabla H(v^k),v^k-\tilde{v}^k\rangle.
\end{eqnarray}
Together~\eqref{eq:Lgamma} and~\eqref{eq:pv} and $\gamma>\frac{\sqrt{57}+1}{2\lambda_{\min}(B^{\top}B)}(L_G+L_H)$ (Assumption~\ref{assump2}), by the definition of $\Lambda^k$, we obtain that
\begin{eqnarray}\label{eq:Lamdbaklowerbound}
\Lambda^k&\geq&F(u^k,v^k)-\langle\nabla_v G(u^k,v^k)+\nabla H(v^k),v^k-\tilde{v}^k\rangle+\frac{5\gamma}{14}\|B(v^k-\tilde{v}^k)\|^2\nonumber\\
&\geq&F(u^k,v^k)-\langle\nabla_v G(u^k,v^k)+\nabla H(v^k),v^k-\tilde{v}^k\rangle+\frac{5\gamma\lambda_{\min}(B^{\top}B)}{14}\|v^k-\tilde{v}^k\|^2\nonumber\\
&\ge& F(u^k,v^k)-\langle\nabla_v G(u^k,v^k)+\nabla H(v^k),v^k-\tilde{v}^k\rangle+\frac{L_G+L_H}{2}\|v^k-\tilde{v}^k\|^2\nonumber\\
&&+\frac{5\gamma\lambda_{\min}(B^{\top}B)-7(L_G+L_H)}{14\|B\|^2}\|\Theta(u^k)+Bv^k\|^2\nonumber\\
&\overset{\mbox{\eqref{eq:Lip_G_v} and~\eqref{eq:Lip_H}}}{\geq} &
F(u^k,\tilde{v}^k)+\frac{5\gamma\lambda_{\min}(B^{\top}B)-7(L_G+L_H)}{14\|B\|^2}\|\Theta(u^k)+Bv^k\|^2 .  
\end{eqnarray}
By the lower boundedness of $F$  (($H_7$) of Assumption~\ref{assump1}), we have that $F(u^k,\tilde{v}^k)>-\infty$. Then $\Lambda^k$ is lower bounded.
 Moreover, Lemma~\ref{lemma_varianceL} suggests that $\Lambda^k$ is decreasing, hence it has a limit, to be
denoted by $\Lambda^*$, satisfying  $\lim\limits_{k\rightarrow\infty}\Lambda^k=\Lambda^*$. 


{\rm (ii).}
By the coercivity of $F$ (($H_7$) of Assumption~\ref{assump1}) and~\eqref{eq:Lamdbaklowerbound}, the sequence $\{(u^k,\tilde{v}^k),v^k\}$ must be bounded.
Finally, because $\{ (u^k,v^k)\}$ is bounded, the sequence  $\{p^{k}\}$ is also bounded, due to Statement (i) in Lemma~\ref{lemma:p}.
Using \eqref{w-and-Lambda}, we have $\|w^k-w^{k+1}\|\rightarrow0$ as $k \rightarrow \infty$.

{\rm (iii).}
By the definition of $\Lambda^k$ \eqref{Lambda-def} and using Statements (i) and (ii), we have $\mathcal{L}_{\gamma}(w^k)\rightarrow\Lambda^*$.
 Use the definition $\mathcal{L}_{\gamma}(w)$ \eqref{eq:Lgamma} and the boundedness of $p^k$ and fact that $\|p^k-p^{k+1}\|\rightarrow 0$, $\|\Theta(u^k)+Bv^{k}\|\rightarrow 0$, $\mathcal{L}_{\gamma}(w^k)\rightarrow\Lambda^*$, we conclude that $F(u^k,v^k)\rightarrow\Lambda^*$ as $k \rightarrow \infty$.

{\rm (iv).}
By the boundedness of sequence $\{w^k\}$ in Statement (ii) and Proposition~\ref{prop}, we further conclude that there exists
$c_5>0$, such that
$$
{\rm dist}\left(0,\partial\mathcal{L}_{\gamma}(w^k)\right)\leq c_5 \|w^k-w^{k+1}\|.
$$
As a consequence, since $\|w^k-w^{k+1}\|\rightarrow 0$ we have ${\rm dist}\left(0,\partial\mathcal{L}_{\gamma}(w^k)\right) \rightarrow 0$.

{\rm (v).}
By Statement (iv) above and $\|w^k-w^{k+1}\|\rightarrow 0$, it is straightforward that any cluster point $\bar{w}$ of $\{w^k\}$ satisfies $0\in\partial\mathcal{L}_{\gamma}(\bar{w})$, i.e., $\bar{w}$ is a stationary point of (P) and $\mathcal{L}_{\gamma}(\bar{w})=\Lambda^*$.
\end{proof}

\begin{proposition} 
\label{corollary:rate}
 Under the assumptions of Theorem~\ref{theo:convergence}, it holds that
 \[
 \min_{1\le j \le k} {\rm dist}(0,\partial\mathcal{L}_{\gamma}(w^j))  = o(1/\sqrt{k}).
 \]
\end{proposition}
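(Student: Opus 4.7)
The plan is to combine the three ingredients already available in the paper: the monotonic descent of the potential $\Lambda^k$, its lower boundedness, and the bound $\mathrm{dist}(0,\partial\mathcal{L}_\gamma(w^k))\le c_5\|w^k-w^{k+1}\|$ from Theorem~\ref{theo:convergence}(iv). Specifically, telescoping the inequality in Lemma~\ref{lemma_varianceL} from $k=1$ to $K$ gives
\[
c_3 \sum_{k=1}^{K}\|w^k-w^{k+1}\|^2 \le \Lambda^1 - \Lambda^{K+1} \le \Lambda^1 - \Lambda^*,
\]
so letting $K\to\infty$ shows $\sum_{k=1}^{\infty}\|w^k-w^{k+1}\|^2 < \infty$. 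Squaring Theorem~\ref{theo:convergence}(iv), I then also have $\sum_{k=1}^{\infty} \mathrm{dist}(0,\partial\mathcal{L}_\gamma(w^k))^2 < \infty$.

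The second step is the standard ``little-$o$'' lemma for summable nonnegative sequences: if $a_k\ge 0$ and $\sum_k a_k <\infty$, then $k\cdot\min_{1\le j\le k}a_j\to 0$. The argument (which I would include for completeness) is that given $\varepsilon>0$ one picks $N$ with $\sum_{j>N}a_j<\varepsilon/2$; then for every $k>2N$,
\[
(k-N)\,\min_{N<j\le k}a_j \le \sum_{j=N+1}^{k}a_j < \varepsilon/2,
\]
so $k\,\min_{1\le j\le k}a_j \le \tfrac{k}{k-N}\cdot\tfrac{\varepsilon}{2}\to\tfrac{\varepsilon}{2}$, hence the limit is $0$.

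Applying this lemma to $a_k = \mathrm{dist}(0,\partial\mathcal{L}_\gamma(w^k))^2$ yields
\[
k\cdot \min_{1\le j\le k}\mathrm{dist}(0,\partial\mathcal{L}_\gamma(w^j))^2 \longrightarrow 0,
\]
which is exactly $\min_{1\le j\le k}\mathrm{dist}(0,\partial\mathcal{L}_\gamma(w^j)) = o(1/\sqrt{k})$, as claimed.

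There is essentially no obstacle here: everything boils down to squaring and summing the two bounds already established. The only subtle point is that one should not settle for the easier $O(1/\sqrt{k})$ statement (which follows from $\min_j a_j \le \tfrac{1}{k}\sum_j a_j$), and instead invoke the standard summability-to-little-$o$ argument above to sharpen it to the asserted $o(1/\sqrt{k})$ rate.
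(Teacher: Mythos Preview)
Your proof is correct and follows essentially the same route as the paper: telescope Lemma~\ref{lemma_varianceL} against the lower bound $\Lambda^*$ to get summability of $\|w^k-w^{k+1}\|^2$, then combine with Theorem~\ref{theo:convergence}(iv) and a standard ``summable $\Rightarrow$ $\min = o(1/k)$'' argument. The only cosmetic difference is ordering: the paper first replaces each $\|w^k-w^{k+1}\|^2$ by the running minimum before summing and then invokes Lemma~1.1 of \cite{Deng2017} (which is exactly the monotone-summable version of your inline lemma), whereas you sum first, push through the $c_5$ bound, and prove the little-$o$ lemma directly; neither version gains anything over the other.
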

\begin{proof}
From Lemma~\ref{lemma_varianceL} and Theorem~\ref{theo:convergence}, we have that
$$
c_3\|w^k-w^{k+1}\|^2\leq[\Lambda^k-\Lambda^*]-[\Lambda^{k+1}-\Lambda^*].
$$
Then
$$
\min\limits_{1\leq j\leq k} c_3\|w^j-w^{j+1}\|^2\leq[\Lambda^k-\Lambda^*]-[\Lambda^{k+1}-\Lambda^*].
$$
It follows that
$$
\sum_{k=1}^{+\infty}\min\limits_{1\leq j\leq k} c_3\|w^j-w^{j+1}\|^2\leq\Lambda^1-\Lambda^*<+\infty.
$$
Obviously, $\min\limits_{1\leq j\leq k} c_3\|w^j-w^{j+1}\|^2$ is monotonically non-increasing and $\min\limits_{1\leq j\leq k} c_3\|w^j-w^{j+1}\|^2\geq 0$, by Lemma 1.1 in~\cite{Deng2017}, we have that $\min\limits_{1\leq j\leq k} c_3\|w^j-w^{j+1}\|^2=o(1/k)$. Combining with Statement (iv) in Theorem~\ref{theo:convergence}, the result follows.
\end{proof}

\begin{remark}
Theorem~\ref{theo:convergence} and Proposition~\ref{corollary:rate} imply that to get an $\varepsilon$-stationary point, the number of iterations that the algorithm runs can be upper bounded by:
$k=\frac{c_5^2(\Lambda^1-\Lambda^*)}{c_3\varepsilon^2}=O(1/\varepsilon^2),$
and we can further identify $\hat{k}=\arg\min\limits_{1\leq j\leq k} c_3\|w^j-w^{j+1}\|^2$ such that $w^{\hat{k}}$ is an $\varepsilon$-stationary point of (P).
\end{remark}

\section{Linear Convergence Under an Error Bound Condition}
\label{LKL}

In addition to the above iteration complexity of $O(1/\varepsilon^2)$, under some further conditions the convergence of NAPP-AL can actually be {\it linear}. In this section, we will present such an analysis. To this end, in addition to the properties stipulated in Lemma~\ref{lemma_varianceL}, we need to introduce the following value proximity error bound (VP-EB) condition:
\begin{definition} [value proximity error bound (VP-EB) condition] Let $\{w^k\}$ be the sequence generated by NAPP-AL, which converges to $\bar{w}\in\bar{\mathbf{W}}$. We say the value proximity error bound holds at $\bar{w}$, if there exist $\kappa_1>0$, $\eta>0$ and $\nu>0$ such that
\begin{equation*}
\mathcal{L}_{\gamma}(w^{k+1})-\Lambda^*\leq\kappa_1\|w^k-w^{k+1}\|^2, \mbox{ when }
w^{k+1}\in\mathbb{B}(\bar{w};\eta)\cap\{w\in\RR^{n}\times\RR^{d}\times\RR^{m} : \mathcal{L}_{\gamma}(w)<\Lambda^*+\nu\} .
\end{equation*}
\end{definition}
In the above definition, $\bar{\mathbf{W}}$ denotes the set of all stationary points,  $\Lambda^*=\mathcal{L}_{\gamma}(\bar{w})$ and $\mathbb{B}(\bar{w};\eta)$ denotes the open ball of radius $\eta>0$ centered at $\bar{w}$. 
Next we shall prove linear convergence of NAPP-AL under this condition.

\begin{theorem} [linear convergence]\label{theo:linearconvergence}
Suppose that the assumptions of Theorem~\ref{theo:convergence} hold, and that $\bar{w}$ is an accumulation point of $\{w^k\}$, and that $\mathcal{L}_{\gamma}(\bar{w})=\Lambda^*$. Furthermore, assume that VP-EB holds at the point $\bar{w}$ with $\eta>0$, $\nu>0$ and $\kappa_1>0$. Then the following statements hold:
\begin{itemize}
\item[{\rm(i)}] There is $k_0$ such that $w^k\in\mathbb{B}(\bar{w};\eta)$ and $\mathcal{L}_{\gamma}(w^k)<\Lambda^*+\nu$, $\forall k\geq k_0$.
\item[{\rm(ii)}] $\sum\limits_{k=0}^{\infty}\|w^k-w^{k+1}\|<+\infty$ (the so-called `finite length property').
\item[{\rm(iii)}] The sequence $\{w^k\}$ actually converges to $\bar{w}$ a stationary point of (P).
\item[{\rm(iv)}] $\{\Lambda^k\}$ converges to $\Lambda^*$ at the Q-linear rate; that is, there are some $\alpha\in(0,1)$ and $k_0$ satisfying
$$\Lambda^{k+1}-\Lambda^*\leq\alpha(\Lambda^k-\Lambda^*),\quad\forall k\geq k_0.$$
\end{itemize}
Moreover, the iterate sequence $\{w^k\}$ itself converges at an R-linear rate to a stationary point $\bar{w}$.
\end{theorem}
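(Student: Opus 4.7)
The plan is to chain the VP-EB bound with the descent relation from Lemma~\ref{lemma_varianceL} to obtain a recursion on $\Lambda^k - \Lambda^*$ that decays geometrically; this will simultaneously deliver Q-linear convergence of the potential $\Lambda^k$ and, via the descent relation again, R-linear decay of the step sizes $\|w^k - w^{k+1}\|$, from which finite length and iterate convergence follow. For (i), since $\Lambda^k$ is monotonically non-increasing (Lemma~\ref{lemma_varianceL}) and $\Lambda^k \to \Lambda^*$ (Theorem~\ref{theo:convergence}), I can pick an index $k_0$ drawn from the convergent subsequence $w^{k_j} \to \bar{w}$ so that $\Lambda^{k_0} < \Lambda^* + \nu$ and $\|w^{k_0} - \bar{w}\|$ is as small as desired. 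Because $\Lambda^k \geq \mathcal{L}_{\gamma}(w^k)$ by the definition~\eqref{Lambda-def} and $\{\Lambda^k\}$ is decreasing, the value part $\mathcal{L}_{\gamma}(w^k) < \Lambda^* + \nu$ then propagates automatically to all $k \ge k_0$; the delicate point is keeping $w^k$ inside $\mathbb{B}(\bar{w};\eta)$, which I will handle jointly with (ii) below.

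For (iv), I would argue as follows. Assuming $w^{k+1}$ already lies in the VP-EB neighborhood of $\bar{w}$, Lemma~\ref{lemma_varianceL} gives $c_3\|w^k - w^{k+1}\|^2 \leq \Lambda^k - \Lambda^{k+1}$, while the definition~\eqref{Lambda-def} yields $\Lambda^{k+1} - \mathcal{L}_{\gamma}(w^{k+1}) \leq C_0\,\|w^k - w^{k+1}\|^2$ for $C_0 := \max\{c_1, c_2, 1/(2\gamma)\}$. Plugging in the VP-EB estimate $\mathcal{L}_{\gamma}(w^{k+1}) - \Lambda^* \leq \kappa_1 \|w^k - w^{k+1}\|^2$ and combining with the descent bound produces $\Lambda^{k+1} - \Lambda^* \leq \frac{\kappa_1 + C_0}{c_3}(\Lambda^k - \Lambda^{k+1})$, which rearranges into $\Lambda^{k+1} - \Lambda^* \leq \alpha (\Lambda^k - \Lambda^*)$ with $\alpha := \frac{\kappa_1+C_0}{\kappa_1+C_0+c_3} \in (0,1)$, i.e., the desired Q-linear rate.

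For the remaining items, I would combine the descent bound $\|w^k - w^{k+1}\| \leq \sqrt{(\Lambda^k - \Lambda^{k+1})/c_3}$ with the Q-linear decay $\Lambda^k - \Lambda^* \leq \alpha^{k-k_0}(\Lambda^{k_0} - \Lambda^*)$ to conclude $\|w^k - w^{k+1}\| \leq M\,(\sqrt{\alpha})^{k-k_0}$ for a suitable $M$. Summability then gives the finite length property (ii), hence $\{w^k\}$ is Cauchy and convergent, and since $\bar{w}$ is an accumulation point the limit must equal $\bar{w}$, which proves (iii). A tail summation finally yields $\|w^k - \bar{w}\| \leq \tilde{M}\,(\sqrt{\alpha})^{k-k_0}$, i.e., the claimed R-linear rate on the iterate sequence.

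The main obstacle I anticipate is the bootstrap in (i): the Q-linear recursion above needs $w^{k+1} \in \mathbb{B}(\bar{w};\eta)$ to invoke VP-EB, yet this is precisely the geometric constraint we are trying to guarantee via the geometric decay of the step sizes. The standard remedy, which I would follow, is a joint induction: tighten the initial choice of $k_0$ so that $\|w^{k_0}-\bar{w}\|$ plus an a priori geometric majorant $M/(1-\sqrt{\alpha})$ for future step sizes is strictly less than $\eta$; then at each step verify via the triangle inequality that the new iterate still lies in $\mathbb{B}(\bar{w};\eta)$ before reapplying VP-EB to justify the next contraction. This closes the loop between (i), (ii), and (iv).
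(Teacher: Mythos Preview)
Your proposal is correct and arrives at the same conclusions, but the route for establishing (i)--(ii) differs from the paper's. The paper runs the induction on $w^k\in\mathbb{B}(\bar w;\eta)$ via the classical K{\L}-style telescoping trick: it combines the concavity inequality $(\Lambda^j-\Lambda^*)^{1/2}-(\Lambda^{j+1}-\Lambda^*)^{1/2}\ge \frac{\Lambda^j-\Lambda^{j+1}}{2(\Lambda^j-\Lambda^*)^{1/2}}$ with VP-EB and Lemma~\ref{lemma_varianceL} to obtain a recursion of the form $2\|w^j-w^{j+1}\|\le \|w^{j-1}-w^j\|+\text{const}\cdot\big[(\Lambda^j-\Lambda^*)^{1/2}-(\Lambda^{j+1}-\Lambda^*)^{1/2}\big]$, which telescopes to bound the total path length and close the induction; only afterwards is (iv) derived separately. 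You instead fold (iv) directly into the induction: under the hypothesis $w^{k_0},\dots,w^{k_0+k}\in\mathbb{B}(\bar w;\eta)$ you deduce the Q-linear contraction $\Lambda^{k_0+j}-\Lambda^*\le\alpha^j(\Lambda^{k_0}-\Lambda^*)$, feed it back through $\|w^j-w^{j+1}\|\le\sqrt{(\Lambda^j-\Lambda^*)/c_3}$ to get a geometric step-size bound, and sum. Both close the loop provided $k_0$ is chosen so that $\|w^{k_0}-\bar w\|+\frac{1}{1-\sqrt\alpha}\sqrt{(\Lambda^{k_0}-\Lambda^*)/c_3}<\eta$, which is possible since both summands vanish along the convergent subsequence. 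Your argument is slightly leaner here because VP-EB is precisely the K{\L} inequality with exponent $1/2$, so the square-root telescoping is not needed; the paper's version is the standard template that would also handle more general exponents.
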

\begin{proof}
{\rm (i).}
By Lemma~\ref{lemma_varianceL}, the sequence $\{\Lambda^k\}$ is strictly decreasing, and we have $\Lambda^k>\Lambda^*$, $\forall k$. Using the assumptions of the theorem and the fact that $\mathcal{L}_{\gamma}(w^k)\rightarrow\Lambda^*$, there is a $k_0$ such that
\begin{equation} \label{eq:condition2}
\|w^{k_0}-\bar{w}\|+\frac{2\left(\sqrt{c_3}+\sqrt{\kappa_1+\max\{c_1,c_2,\frac{1}{2\gamma}\}}\right)}{c_3}
\sqrt{\Lambda^{k_0}-\Lambda^*}<\eta,
\end{equation}
and
\[
\mathcal{L}_{\gamma}(w^{k_0})\leq\Lambda^{k_0}<\Lambda^*+\nu.
\]
 Now we shall use induction to prove that the sequence $\{w^k\}\subset\mathbb{B}(\bar{w};\eta)$, $\forall k>k_0$. It is clear that $w^{k_0}\in\mathbb{B}(\bar{x};\eta)$ by~\eqref{eq:condition2}. The inequalities $\Lambda^*<\Lambda^{k_0+1}\leq \Lambda^{k_0}<\Lambda^*+\nu$ and $\mathcal{L}_{\gamma}(w^{k_0+1})\leq\Lambda^{k_0+1}<\Lambda^*+\nu$ hold trivially. On the other hand, by Lemma~\ref{lemma_varianceL}, we have
$$
\|w^{k_0}-w^{k_0+1}\|\leq\sqrt{\frac{\Lambda^{k_0}-\Lambda^{k_0+1}}{c_3}}\leq\sqrt{\frac{\Lambda^{k_0}-\Lambda^*}{c_3}}
$$
and
$$
\|w^{k_0+1}-\bar{w}\|\leq\|w^{k_0}-\bar{w}\|+\|w^{k_0}-w^{k_0+1}\|\leq\|w^{k_0}-\bar{w}\|+\sqrt{\frac{\Lambda^{k_0}-\Lambda^*}{c_3}}
\overset{\eqref{eq:condition2}}{<}
\eta.
$$
Thus $w^{k_0+1}\in\mathbb{B}(\bar{w};\eta)$. Now, as hypothesis for the induction, we assume that $w^j\in\mathbb{B}(\bar{w};\eta)$ for $j=k_0+1,..,k_0+k$ and $w^{k_0+k}\neq w^{k_0+k+1}$. Since $\{\Lambda^k\}$ is a strictly decreasing sequence, we have  $\Lambda^*<\Lambda^{k_0+k+1}<\Lambda^{k_0+k}<\cdots<\Lambda^{k_0+2}<\Lambda^{k_0+1}<\Lambda^*+\nu$ and $\mathcal{L}_{\gamma}(w^{j})\leq\Lambda^j<\Lambda^*+\nu$, $\forall j\in\{k_0+1,...,k_0+k+1\}$. To complete the induction we need to show that $w^{k_0+k+1}\in\mathbb{B}(\bar{w};\eta)$.

First, using the concavity of the function $x^{\frac{1}{2}}$ (the gradient inequality), we have
\begin{equation}\label{eq:d1d2}
\left(\Lambda^j-\Lambda^*\right)^{\frac{1}{2}}-\left(\Lambda^{j+1}-\Lambda^*\right)^{\frac{1}{2}}
\geq \frac{\Lambda^j-\Lambda^{j+1}}{2\left(\Lambda^j-\Lambda^*\right)^{\frac{1}{2}}},
\end{equation}
for $j=k_0+1,k_0+2,\dots,k_0+k$.
Since $w^j\in\mathbb{B}(\bar{w};\eta)$ and $\mathcal{L}_{\gamma}(w^j)<\Lambda^*+\nu$, using the VP-EB condition we have
\begin{equation}\label{eq:fvp_KL_j}
\mathcal{L}_{\gamma}(w^{j})-\Lambda^*\leq\kappa_1\|w^{j-1}-w^j\|^2.
\end{equation}
Observe the definition of $\Lambda^k$ \eqref{Lambda-def}, the above inequality leads to
\begin{equation}\label{eq:d1d2_2}
\Lambda^{j}-\Lambda^*\leq \left(\kappa_1+\max\left\{c_1,c_2,\frac{1}{2\gamma}\right\}\right) \|w^{j-1}-w^j\|^2.
\end{equation}
Combining Lemma~\ref{lemma_varianceL},~\eqref{eq:d1d2} and~\eqref{eq:d1d2_2}, we obtain
$$
\frac{2\sqrt{\kappa_1+\max\{c_1,c_2,\frac{1}{2\gamma}\}}}{c_3}\|w^{j-1}-w^j\|
\left[\left(\Lambda^j-\Lambda^*\right)^{\frac{1}{2}}-\left(\Lambda^{j+1}-\Lambda^*\right)^{\frac{1}{2}}\right]
\geq \|w^j-w^{j+1}\|^2.
$$
Therefore,
\begin{eqnarray}\label{eq:34}
2\|w^j-w^{j+1}\|\leq\|w^{j-1}-w^{j}\|+\frac{2\sqrt{\kappa_1+\max\{c_1,c_2,\frac{1}{2\gamma}\}}}{c_3}
\left[\left(\Lambda^j-\Lambda^*\right)^{\frac{1}{2}}-\left(\Lambda^{j+1})-\Lambda^*\right)^{\frac{1}{2}}\right].
\end{eqnarray}
Summing up~\eqref{eq:34} over $j=k_0+1,...,k_0+k$, we obtain
\begin{eqnarray}\label{eq:51}
&&\sum_{j=k_0+1}^{k_0+k}\|w^j-w^{j+1}\|+\|w^{k_0+k}-w^{k_0+k+1}\|\nonumber\\
&\leq&\|w^{k_0}-w^{k_0+1}\|+\frac{2\sqrt{\kappa_1+\max\{c_1,c_2,\frac{1}{2\gamma}\}}}{c_3}
\left[\left(\Lambda^{k_0+1}-\Lambda^*\right)^{\frac{1}{2}}-\left(\Lambda^{k_0+k+1}-\Lambda^*\right)^{\frac{1}{2}}\right].\nonumber\\
\end{eqnarray}
Using~\eqref{eq:51} along with the triangle inequality, we have
\begin{eqnarray*}
\|w^{k_0+k+1}-\bar{w}\|&\leq&\|w^{k_0}-\bar{w}\|+\|w^{k_0}-w^{k_0+1}\|+\sum_{j=k_0+1}^{k_0+k}\|w^j-w^{j+1}\|\\      &\leq&\|w^{k_0}-\bar{w}\|+2\|w^{k_0}-w^{k_0+1}\|+\frac{2\sqrt{\kappa_1+\max\{c_1,c_2,\frac{1}{2\gamma}\}}}{c_3}
\left(\Lambda^{k_0+1}-\Lambda^*\right)^{\frac{1}{2}}\\    &\leq&\|w^{k_0}-\bar{w}\|+\frac{2\sqrt{c_3}+2\sqrt{\kappa_1+\max\{c_1,c_2,\frac{1}{2\gamma}\}}}{c_3}\left(\Lambda^{k_0}-\Lambda^*\right)^{\frac{1}{2}}\\
&\overset{\eqref{eq:condition2}}{<} &\eta . 
\end{eqnarray*}
This shows that $w^{k_0+k+1}\in\mathfrak{B}(\bar{w};\eta)$, and
(i) is thus proven by induction. 

{\rm (ii)-(iii).}
A direct consequence of \eqref{eq:51} is, for all $k$,
$$\sum_{j=k_0+1}^{k_0+k}\|w^j-w^{j+1}\|\leq\|w^{k_0}-w^{k_0+1}\|+\frac{2\sqrt{\kappa_1+\max\{c_1,c_2,\frac{1}{2\gamma}\}}}{c_3}
\left(\Lambda^{k_0+1}-\Lambda^*\right)^{\frac{1}{2}}<+\infty.$$
Therefore
$$\sum_{k=0}^{+\infty}\|w^k-w^{k+1}\|<+\infty.$$
In particular, this implies that the whole sequence $\{w^k\}$ actually converges to the point $\bar{w}$, and that $\bar{w}$ is a stationary point of (P) by Theorem~\ref{theo:convergence}.

{\rm (iv).}
By the combination of Lemma~\ref{lemma_varianceL} and~\eqref{eq:d1d2_2}, we have that $\forall k\geq k_0$,
\begin{eqnarray}
\Lambda^{k+1}-\Lambda^*&=&\Lambda^{k}-\Lambda^*+(\Lambda^{k+1}-\Lambda^{k})\nonumber\\
&\overset{\eqref{w-and-Lambda}}{\leq}&\Lambda^{k}-\Lambda^*-c_3\|w^k-w^{k+1}\|^2\nonumber\\
&\overset{\eqref{eq:d1d2_2}}{\leq}&\Lambda^{k}-\Lambda^*-\frac{c_3}{\kappa_1+\max\{c_1,c_2,\frac{1}{2\gamma}\}}(\Lambda^{k+1}-\Lambda^*).
\end{eqnarray}
Therefore
\begin{eqnarray}
\Lambda^{k+1}-\Lambda^*\leq\alpha(\Lambda^{k}-\Lambda^*),\quad\forall k\geq k_0
\end{eqnarray}
where $\alpha:=\frac{1}{1+\frac{c_3}{\kappa_1+\max\{c_1,c_2,\frac{1}{2\gamma}\}}}\in(0,1)$.

It follows that $\Lambda^k-\Lambda^*\leq\alpha^{k-k_0}(\Lambda^{k_0}-\Lambda^*)$, $\forall k\geq k_0$. By Lemma~\ref{lemma_varianceL}, we have
\begin{eqnarray}
\|w^k-w^{k+1}\|^2&\leq&\frac{1}{c_3}[(\Lambda^{k}-\Lambda^*)-(\Lambda^{k+1}-\Lambda^*)]\nonumber\\
&\leq&\frac{1}{c_3}(\Lambda^{k}-\Lambda^*)\nonumber\\
&\leq&\frac{\alpha^{k-k_0}}{c_3}(\Lambda^{k_0}-\Lambda^*).
\end{eqnarray}
Therefore, $\|w^k-w^{k+1}\|\leq\hat{M}(\sqrt{\alpha})^{k-k_0}$, $\forall k>k_0$, with $\hat{M}=\sqrt{\frac{\Lambda^{k_0}-\Lambda^*}{c_3}}$. By Statement (iii), we conclude that $\{w^k\}$ converges to a desired stationary point $\bar{w}$. Moreover,
$$
\|w^k-\bar{w}\|\leq\sum\limits_{j=k}^{+\infty}\|w^{j}-w^{j+1}\|\leq\frac{\hat{M}}{(1-\sqrt{\alpha})(\alpha)^{k_0/2}}\,(\sqrt{\alpha})^{k},
$$
showing that $\{w^k\}$ converges to $\bar{w}$ at an R-linear rate.
\end{proof}

\section{On the VP-EB Condition}\label{LMS}

As we have observed, the error bound condition VP-EB leads to linear convergence of NAPP-AL. A natural question arises: {\it Can VP-EB ever be satisfied in a natural setting?} In this section we shall show that at least under two other popular conditions, VP-EB is indeed satisfied.

For given positive numbers $\eta$ and $\nu$, let us define
$$\mathfrak{B}(\bar{w};\eta,\nu)=\mathbb{B}(\bar{w};\eta)\cap\{w\in\RR^{n}\times\RR^{d}\times\RR^m : \Lambda^*<\mathcal{L}_{\gamma}(w)<\Lambda^*+\nu\}.$$
\begin{definition}[Kurdyka-{\L}ojasiewicz property~\cite{Attouch2013,LiPong2018}] The proper lower semicontinuous function $\mathcal{F}$ is said to satisfy the Kurdyka-{\L}ojasiewicz (K{\L}) property at $\bar{x}$ with exponent $\theta\in(0,1)$, if there exist $\eta>0$, $\nu>0$, and $\kappa_2>0$ such that the following inequality holds:
$$[\mathcal{F}(x)-\mathcal{F}(\bar{x})]^{\theta}\leq\kappa_2 \, {\rm dist}(0,\partial\mathcal{F}(x)),\quad\forall x\in\mathfrak{B}(\bar{x};\eta,\nu).$$
\end{definition}
\begin{proposition}[K{\L} property implies VP-EB]
Let the sequence $\{w^k\}$ be generated by NAPP-AL and $\bar{w}$ be an accumulation point of $\{w^k\}$. If $\mathcal{L}_{\gamma}$ satisfies K{\L} property at point $\bar{w}$ with exponent $\theta=\frac{1}{2}$, $\eta>0$, $\nu>0$ and $\kappa_2>0$, then the VP-EB property holds at $\bar{w}$.
\end{proposition}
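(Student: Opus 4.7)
The plan is to combine the Kurdyka--{\L}ojasiewicz inequality at exponent $\theta=\tfrac12$ with the subgradient-residual bound already established in Proposition~\ref{prop} (and summarized in Theorem~\ref{theo:convergence}(iv)). The K{\L} hypothesis gives, for all $w\in\mathfrak{B}(\bar w;\eta,\nu)$,
\[
\bigl[\mathcal{L}_\gamma(w)-\Lambda^*\bigr]^{1/2}\ \le\ \kappa_2\,{\rm dist}\bigl(0,\partial\mathcal{L}_\gamma(w)\bigr),
\]
so squaring and evaluating at $w=w^{k+1}$ we would get
\[
\mathcal{L}_\gamma(w^{k+1})-\Lambda^*\ \le\ \kappa_2^{\,2}\,{\rm dist}\bigl(0,\partial\mathcal{L}_\gamma(w^{k+1})\bigr)^{2}.
\]
Then, invoking Proposition~\ref{prop} together with the boundedness of $\{w^k\}$ (Theorem~\ref{theo:convergence}(ii)), the quantity $h(u^k,p^k)$ is bounded above by the constant $c_5$ of Theorem~\ref{theo:convergence}(iv), so
\[
{\rm dist}\bigl(0,\partial\mathcal{L}_\gamma(w^{k+1})\bigr)\ \le\ c_5\,\|w^k-w^{k+1}\|.
\]
Chaining these two inequalities yields the VP-EB estimate with constant $\kappa_1:=\kappa_2^{\,2}c_5^{\,2}$.

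First I would verify that $w^{k+1}$ lands in the K{\L} neighborhood whenever it lies in the VP-EB neighborhood. The VP-EB condition is stated for $w^{k+1}\in\mathbb{B}(\bar w;\eta)\cap\{\mathcal{L}_\gamma(w)<\Lambda^*+\nu\}$, while the K{\L} inequality is assumed on the strictly smaller set $\mathfrak{B}(\bar w;\eta,\nu)$, which additionally requires $\mathcal{L}_\gamma(w)>\Lambda^*$. I would handle this by splitting cases: if $\mathcal{L}_\gamma(w^{k+1})\le\Lambda^*$, then the VP-EB inequality is trivial since its left-hand side is non-positive while $\kappa_1\|w^k-w^{k+1}\|^{2}\ge 0$; if instead $\mathcal{L}_\gamma(w^{k+1})>\Lambda^*$, then $w^{k+1}\in\mathfrak{B}(\bar w;\eta,\nu)$ and the argument of the previous paragraph applies verbatim.

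The only mildly subtle point, which I would flag as the main (though modest) obstacle, is justifying the uniform bound on $h(u^k,p^k)$. Proposition~\ref{prop} gives a pointwise bound involving $\|q^k\|$ and $1/\delta_k$, and $\delta_k$ itself depends on $\|q^k\|$; but $\|q^k\|\le\|p^k\|+\gamma(L^0_\Theta\|u^k\|+\|B\|\|v^k\|+\|\Theta(0)\|)$, and by Theorem~\ref{theo:convergence}(ii) the whole sequence $\{w^k\}$ is bounded, so $\|q^k\|$ (and hence $1/\delta_k$ and $h(u^k,p^k)$) is bounded by a deterministic constant along the trajectory. This is exactly the observation that produced the constant $c_5$ in Theorem~\ref{theo:convergence}(iv); I would simply cite it rather than redo the estimate.

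Putting everything together, the VP-EB condition holds at $\bar w$ with the same radius $\eta$ and threshold $\nu$ coming from K{\L}, and with constant $\kappa_1=\kappa_2^{\,2}c_5^{\,2}$. I would finish by noting that this in turn activates Theorem~\ref{theo:linearconvergence}, explaining the linear convergence of NAPP-AL whenever $\mathcal{L}_\gamma$ enjoys a K{\L} property of exponent $1/2$ at a limit point.
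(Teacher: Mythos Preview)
Your proof is correct and follows essentially the same route as the paper's: a two-case split on whether $\mathcal{L}_\gamma(w^{k+1})\le\Lambda^*$ (trivial) or $\mathcal{L}_\gamma(w^{k+1})>\Lambda^*$ (apply K{\L} and chain with the subgradient bound ${\rm dist}(0,\partial\mathcal{L}_\gamma(w^{k+1}))\le c_5\|w^k-w^{k+1}\|$ from Theorem~\ref{theo:convergence}(iv)), yielding the same constant $\kappa_1=(\kappa_2 c_5)^2$. Your additional justification for the uniform bound on $h(u^k,p^k)$ is more explicit than the paper's but otherwise the arguments coincide.
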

\begin{proof}
For given $w^{k+1}\in\mathbb{B}(\bar{w};\eta)$ and $\mathcal{L}_{\gamma}(w^{k+1})<\Lambda^*+\nu$, we have two cases to consider here.

{\bf Case 1.}
If $w^{k+1}\in\mathbb{B}(\bar{w};\eta)\cap\{w\in\RR^{n\times d\times m} : \Lambda^*<\mathcal{L}_{\gamma}(w)<\Lambda^*+\nu\}$ and $\mathcal{L}_{\gamma}(w)$ satisfies K{\L} property at the point $\bar{w}$ with exponent $\theta=\frac{1}{2}$, $\eta>0$, $\nu>0$ and $\kappa_2>0$, by Statement (iv) of Theorem~\ref{theo:convergence}, we obtain that
\[
\mathcal{L}_{\gamma}(w^{k+1})-\Lambda^*\leq(\kappa_2)^2 {\rm dist}^2\left(0,\mathcal{L}_{\gamma}(w^{k+1})\right) \leq(\kappa_2 c_5)^2\|w^k-w^{k+1}\|^2.
\]

{\bf Case 2.} For the case $w^{k+1}\in\mathbb{B}(\bar{w};\eta)\cap\{w\in\RR^{n\times d\times m} : \mathcal{L}_{\gamma}(w)<\Lambda^*\}$, we trivially have
\[
\mathcal{L}_{\gamma}(w^{k+1})-\Lambda^* \leq (\kappa_2 c_5)^2\|w^k-w^{k+1}\|^2.
\]
Therefore, for both cases, we have
\[
\mathcal{L}_{\gamma}(w^{k+1})-\Lambda^*\leq(\kappa_2 c_5)^2\|w^k-w^{k+1}\|^2.
\]
\end{proof}

Next, we introduce the following metric-subregularity condition.

\begin{definition}[metric subregularity] The set-valued mapping $\mathcal{H}(w)$ is called metric subregularity around $(\bar{w},0)$ if there is a neighborhood $\mathbb{B}(\bar{w};\eta)$ of $\bar{w}$ and $\kappa_3>0$ such that
$$
{\rm dist}(w,\mathcal{H}^{-1}(0))\leq\kappa_3 \, {\rm dist}(0,\mathcal{H}(w)),\qquad\forall w\in\mathbb{B}(\bar{w};\eta).
$$
\end{definition}

To related the metric subregularity with VP-EB, we make the following assumption:
\begin{assumption}\label{assump3}
For $\bar{w}\in\bar{\mathbf{W}}$, there is $\delta>0$ such that $\mathcal{L}_{\gamma}(w)\leq\mathcal{L}_{\gamma}(\bar{w})$ whenever $w\in\bar{\mathbf{W}}$ and $\|w-\bar{w}\|\leq\delta$.
\end{assumption}
Note that if $\bar w$ is an isolated saddle point, then Assumption~\ref{assump3} holds true trivially.

\begin{lemma}[cost-to-go inequality~\cite{TsengYun2009}]\label{lemma:cost-to-go} Let $\{w^k\}$ be a sequence generated by NAPP-AL and $\bar{w}$ be one stationary point of (P). Then there exists $c_6>0$, such that
\begin{equation}\label{eq:ctg}
\mathcal{L}_{\gamma}(w^{k+1})-\mathcal{L}_{\gamma}(\bar{w})\leq c_6 \left(\|\bar{w}-w^{k+1}\|^2+\|w^k-w^{k+1}\|^2\right).
\end{equation}
\end{lemma}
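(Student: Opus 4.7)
The strategy is to decompose $\mathcal{L}_{\gamma}(w^{k+1}) - \mathcal{L}_{\gamma}(\bar w)$ into the change in $F=G+J+H$ plus the augmented coupling, and to bound each piece by a constant multiple of $\|w^{k+1}-\bar w\|^2+\|w^k-w^{k+1}\|^2$. Since $\bar w$ is a stationary point, $\Theta(\bar u)+B\bar v=0$ and $\mathcal{L}_{\gamma}(\bar w)=F(\bar u,\bar v)$; hence
\[
\mathcal{L}_{\gamma}(w^{k+1})-\mathcal{L}_{\gamma}(\bar w) = \Delta F+\langle p^{k+1},\Theta(u^{k+1})+Bv^{k+1}\rangle+\frac{\gamma}{2}\|\Theta(u^{k+1})+Bv^{k+1}\|^2,
\]
where $\Delta F$ collects the $G$, $J$, and $H$ differences, and I will use $\Theta(u^{k+1})+Bv^{k+1}=\frac{1}{\gamma}(p^{k+1}-p^k)$ via the dual update \eqref{eq:dual} to separate $p$-residuals cleanly.

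For the smooth pieces I would apply the descent inequalities \eqref{eq:Lip_G}--\eqref{eq:Lip_H} expanded at $\bar w$, converting $G(u^{k+1},v^{k+1})-G(\bar u,\bar v)+H(v^{k+1})-H(\bar v)$ to linear terms in $\nabla G(\bar w),\nabla H(\bar v)$ plus a quadratic remainder of order $\|w^{k+1}-\bar w\|^2$. For the nonsmooth, nonconvex piece $J$, I use the optimality of $u^{k+1}$ in \eqref{eq:primal_u} at the comparison point $u=\bar u\in\mathbf{U}$, together with $D(u^{k+1},u^k)\ge 0$ and $D(\bar u,u^k)\leq\frac{L_K}{2}\|\bar u-u^k\|^2$, to obtain
\[
J(u^{k+1})-J(\bar u)\leq\langle\nabla_u G(u^k,v^k),\bar u-u^{k+1}\rangle+\langle q^k,\nabla\Omega(u^k)(\bar u-u^{k+1})+\Phi(\bar u)-\Phi(u^{k+1})\rangle+\frac{L_K}{2\epsilon^k}\|\bar u-u^k\|^2.
\]
Using subproblem optimality rather than a subgradient inequality at $\bar u$ avoids the need for $J$ to be convex. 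A companion bound $\langle\nabla_v G(u^k,v^k)+\nabla H(v^k),v^{k+1}-\bar v\rangle+\langle q^k,B(v^{k+1}-\bar v)\rangle\leq\frac{\gamma}{2}\|B(\bar v-v^k)\|^2$ follows from the $v$-subproblem optimality at $v=\bar v$.

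The central algebraic step is the cancellation identity
\[
B(\bar v-v^{k+1})+\nabla\Omega(u^k)(\bar u-u^{k+1})+\Phi(\bar u)-\Phi(u^{k+1})=\Omega(u^{k+1})-\Omega(\bar u)-\nabla\Omega(u^k)(u^{k+1}-\bar u)-\frac{1}{\gamma}(p^{k+1}-p^k),
\]
obtained from $B\bar v=-\Omega(\bar u)-\Phi(\bar u)$ and $Bv^{k+1}=\frac{1}{\gamma}(p^{k+1}-p^k)-\Omega(u^{k+1})-\Phi(u^{k+1})$. The $\Phi$-differences vanish, which is essential since $\Phi$ has no Lipschitz gradient, and the surviving $\Omega$-remainder is bounded by $\frac{\|q^k\|L_\Omega}{2}(\|u^{k+1}-u^k\|^2+\|\bar u-u^k\|^2)$ via \eqref{eq:Lip_Omega}. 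Assembling the four inequalities above, the cross-term $\langle\nabla G(\bar w)-\nabla G(w^k),\cdot\rangle+\langle\nabla H(\bar v)-\nabla H(v^k),\cdot\rangle$ is $O(\|w^k-\bar w\|\cdot\|w^{k+1}-\bar w\|)$ by the Lipschitzness of $\nabla G,\nabla H$, hence $\leq O(\|w^k-w^{k+1}\|^2+\|w^{k+1}-\bar w\|^2)$ by Young's inequality and $\|w^k-\bar w\|\leq\|w^k-w^{k+1}\|+\|w^{k+1}-\bar w\|$; each occurrence of $\|\bar u-u^k\|^2$ is similarly bounded by $2\|\bar u-u^{k+1}\|^2+2\|u^{k+1}-u^k\|^2$. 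All pieces except the leftover $-\frac{1}{\gamma}\langle q^k,p^{k+1}-p^k\rangle$ then fit the target bound.

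To absorb this $p$-residue, I combine it with the augmented-coupling contributions to get $\frac{1}{\gamma}\langle p^{k+1}-q^k,p^{k+1}-p^k\rangle+\frac{1}{2\gamma}\|p^{k+1}-p^k\|^2$, and then write $p^{k+1}-q^k=(p^{k+1}-\bar p)-(q^k-\bar p)$. Subtracting Lemma~\ref{lemma:p}(ii) from the stationarity identity $B^\top\bar p=-\nabla_v G(\bar u,\bar v)-\nabla H(\bar v)$ and applying Lemma~\ref{lemma:p}(i) (for $\bar p$ chosen in ${\rm Im}(B)$) gives $\|p^{k+1}-\bar p\|\leq O(\|w^k-w^{k+1}\|+\|w^{k+1}-\bar w\|)$; the definition of $q^k$ combined with $\Theta(\bar u)+B\bar v=0$ yields $\|q^k-\bar p\|\leq O(\|w^k-\bar w\|)$. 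Since $\|p^{k+1}-p^k\|\leq\|w^k-w^{k+1}\|$ and $\|q^k\|$ is uniformly bounded by Theorem~\ref{theo:convergence}(ii), while $1/\epsilon^k$ is uniformly bounded through \eqref{delta-k}, Young's inequality produces the desired constant $c_6$. The main obstacle is the bookkeeping: without the $\Phi$-cancellation identity the inequality is unattainable, and every residue that is only linear in $\|w^{k+1}-\bar w\|$ or $\|w^k-w^{k+1}\|$ must be paired with a suitable partner before Young's inequality is applied.
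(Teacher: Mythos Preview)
Your overall plan matches the paper's proof closely: both invoke the optimality of the subproblems \eqref{eq:primal_u} and \eqref{eq:primal_v} at the test points $\bar u,\bar v$, exploit the same $\Phi$-cancellation (the paper absorbs it tacitly into its term ${\cal T}_2$), and control the smooth parts via the Lipschitz bounds on $\nabla G,\nabla H,\nabla\Omega$ together with the uniform bounds on $\|q^k\|$ and $1/\epsilon^k$. Your cancellation identity is equivalent to the paper's ${\cal T}_1$--${\cal T}_5$ decomposition once you observe $\Theta(u^{k+1})-\Theta(\bar u)+B(v^{k+1}-\bar v)=\frac{1}{\gamma}(p^{k+1}-p^k)$.

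The one place you diverge is the last paragraph, and there you take a harder road that introduces a gap. You split $p^{k+1}-q^k=(p^{k+1}-\bar p)-(q^k-\bar p)$ and then appeal to Lemma~\ref{lemma:p}(i) to pass from $\|B^\top(p^{k+1}-\bar p)\|$ to $\|p^{k+1}-\bar p\|$. That step needs $p^{k+1}-\bar p\in{\rm Im}(B)$, but the lemma is stated for an \emph{arbitrary} stationary point $\bar w=(\bar u,\bar v,\bar p)$; there is no hypothesis that $\bar p\in{\rm Im}(B)$, nor that $p^0\in{\rm Im}(B)$, so your parenthetical ``for $\bar p$ chosen in ${\rm Im}(B)$'' is not justified. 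The paper avoids this entirely by noting the direct identity
\[
p^{k+1}-q^k=\gamma\bigl[\Theta(u^{k+1})-\Theta(u^k)+B(v^{k+1}-v^k)\bigr],
\]
which follows immediately from the definitions of $q^k$ and the dual update \eqref{eq:dual}. This gives $\|p^{k+1}-q^k\|\leq\gamma L^0_\Theta\|u^k-u^{k+1}\|+\gamma\|B\|\,\|v^k-v^{k+1}\|=O(\|w^k-w^{k+1}\|)$, so the residual term $\frac{1}{\gamma}\langle p^{k+1}-q^k,p^{k+1}-p^k\rangle$ is bounded by $O(\|w^k-w^{k+1}\|^2)$ without ever mentioning $\bar p$. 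Replace your last paragraph with this observation and the rest of your argument goes through unchanged.
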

\begin{proof} By the fact that $\Theta(\bar{u})+B\bar{v}=0$, and
\begin{eqnarray*}
0 &\ge& \langle\nabla_uG(u^k,v^k),u^{k+1}-\bar{u}\rangle+J(u^{k+1})-J(\bar{u}) \\
& & +\langle q^k,\nabla\Omega(u^k)(u^{k+1}-\bar{u})+\Phi(u^{k+1})-\Phi(\bar{u})\rangle+\frac{1}{\epsilon^k}\left[ D(u^{k+1},u^k)-D(\bar{u},u^k)\right],
\end{eqnarray*}
and
\[
\langle\nabla_v G(u^k,v^k)+\nabla H(v^k),v^{k+1}-\bar{v}\rangle+\langle q^k,B(v ^{k+1}-\bar{v})\rangle+\frac{\gamma}{2}\left[\|B(v^k-v^{k+1})\|^2-\|B(\bar{v}-v^k)\|^2 \right]
\leq0,
\]
we obtain
\begin{eqnarray*}
&&\mathcal{L}_{\gamma}(w^{k+1})-\mathcal{L}_{\gamma}(\bar{w})\nonumber\\
&=&\langle p^{k+1}, \Theta(u^{k+1})-\Theta(\bar{u})+B(v^{k+1}-\bar{v})\rangle+F(u^{k+1},v^{k+1})-F(\bar{u},\bar{v})+\frac{\gamma}{2}\|\Theta(u^{k+1})+Bv^{k+1}\|^2\nonumber\\
&=&\langle p^{k+1}-q^k, \left(\Theta(u^{k+1})-\Theta(\bar{u})\right)+B(v^{k+1}-\bar{v})\rangle\nonumber\\
&&+\left\{\langle\nabla_uG(u^k,v^k),u^{k+1}-\bar{u}\rangle+J(u^{k+1})-J(\bar{u})+\langle q^k,\nabla\Omega(u^k)(u^{k+1}-\bar{u})+\Phi(u^{k+1})-\Phi(\bar{u})\rangle \right. \nonumber\\
&&+\left. \frac{1}{\epsilon^k}[D(u^{k+1},u^k)-D(\bar{u},u^k)]\right\}+\langle q^k,\Omega(u^{k+1})-\Omega(\bar{u})-\nabla\Omega(u^k)(u^{k+1}-\bar{u})\rangle\nonumber\\
&&+\left\{\langle\nabla_vG(u^k,v^k)+\nabla H(v^k),v^{k+1}-\bar{v}\rangle+\langle q^k,B(v^{k+1}-\bar{v})\rangle
+\frac{\gamma}{2}\left[\|B(v^k-v^{k+1})\|^2-\|B(\bar{v}-v^k)\|^2
\right]\right\} \nonumber\\
&&+\left\{ G(u^{k+1},v^{k+1})-G(\bar{u},\bar{v})-\langle\nabla_uG(u^k,v^k),u^{k+1}-\bar{u}\rangle-\langle\nabla_vG(u^k,v^k),v^{k+1}-\bar{v}\rangle\right\}\nonumber\\
&&+\left\{ H(v^{k+1})-H(\bar{v})-\langle\nabla H(v^k),v^{k+1}-\bar{v}\rangle\right\}\nonumber\\
&&-\frac{1}{\epsilon^k}[D(u^{k+1},u^k)-D(\bar{u},u^k)]-\frac{\gamma}{2}[\|B(v^k-v^{k+1})\|^2-\|B(\bar{v}-v^k)\|^2]+\frac{\gamma}{2}\|\Theta(u^{k+1})+Bv^{k+1}\|^2\nonumber
\end{eqnarray*}
which further leads to an upper bound
\begin{eqnarray}\label{eq:Lbw}
&&\mathcal{L}_{\gamma}(w^{k+1})-\mathcal{L}_{\gamma}(\bar{w})\nonumber\\
&\leq&\underbrace{\langle p^{k+1}-q^k,\left(\Theta(u^{k+1})-\Theta(\bar{u})\right)+B(v^{k+1}-\bar{v})\rangle}_{{\cal T}_1} 
+\underbrace{\langle q^k,\Omega(u^{k+1})-\Omega(\bar{u})-\nabla\Omega(u^k)(u^{k+1}-\bar{u})\rangle}_{{\cal T}_2}\nonumber\\
&&+\underbrace{\left\{G(u^{k+1},v^{k+1})-G(\bar{u},\bar{v})-\langle\nabla_uG(u^k,v^k),u^{k+1}-\bar{u}\rangle
-\langle\nabla_vG(u^k,v^k),v^{k+1}-\bar{v}\rangle\right\}}_{{\cal T}_3}\nonumber\\
&&+\underbrace{\left\{H(v^{k+1})-H(\bar{v})-\langle\nabla H(v^k),v^{k+1}-\bar{v}\rangle\right\}}_{{\cal T}_4}\nonumber\\
&&+\underbrace{\frac{1}{\epsilon^k}D(\bar{u},u^k)+\frac{\gamma}{2}\|B(\bar{v}-v^k)\|^2+\frac{\gamma}{2}\|\Theta(u^{k+1})+Bv^{k+1}\|^2}_{{\cal T}_5}.
\end{eqnarray}
Using the facts that $q^k=p^k+\gamma\left(\Theta(u^k)+Bv^k\right)$ and $p^{k+1}-p^k=\gamma\left(\Theta(u^{k+1})+Bv^{k+1}\right)$,
we may further upper bound the term ${\cal T}_1$:
\begin{eqnarray}\label{eq:b1}
{\cal T}_1 &=&\langle p^{k+1}-q^k, \Theta(u^{k+1})-\Theta(\bar{u})+B(v^{k+1}-\bar{v})\rangle\nonumber\\
&=&\langle\gamma[\Theta(u^{k+1})-\Theta(u^k)+B(v^{k+1}-v^k)],\left(\Theta(u^{k+1})-\Theta(\bar{u})\right)+B(v^{k+1}-\bar{v})\rangle\nonumber\\
&\leq&\gamma^2(L^0_{\Theta})^2\|u^k-u^{k+1}\|^2+\gamma^2\|B\|^2\|v^k-v^{k+1}\|^2+ (L^0_{\Theta})^2\|\bar{u}-u^{k+1}\|^2+\|B\|^2\|\bar{v}-v^{k+1}\|^2.\nonumber\\
\end{eqnarray}
Due to the boundedness of sequence $\{w^k\}$ as stipulated by Statement (ii) of Theorem~\ref{theo:convergence}, there exist positive constants $c_7$ and $c_8$ satisfying:
\[
c_7 >\|q^k\|L_{\Omega}/2 \mbox{ and } c_8 >5 \|q^k\|L_{\Omega}/2 .
\]
Therefore, by {\rm(H$_5$)} of Assumption~\ref{assump1}, we may upper bound the term ${\cal T}_2$ as
\begin{eqnarray}\label{eq:b1_1}
{\cal T}_2 &=&\langle q^k,\Omega(u^{k+1})-\Omega(\bar{u})-\nabla\Omega(u^k)(u^{k+1}-\bar{u})\rangle\nonumber\\
&=&\langle q^k,\Omega(u^{k+1})-\Omega(\bar{u})-\nabla\Omega(\bar{u})(u^{k+1}-\bar{u})\rangle+\langle q^k,[\nabla\Omega(\bar{u})-\nabla\Omega(u^k)](u^{k+1}-\bar{u})\rangle\nonumber\\
&\leq&\|q^k\|L_{\Omega}\|\bar{u}-u^{k+1}\|^2+\|q^k\|L_{\Omega}\|\bar{u}-u^k\|\|\bar{u}-u^{k+1}\|\nonumber\\
&\leq&\|q^k\|L_{\Omega}\|\bar{u}-u^{k+1}\|^2+\|q^k\|L_{\Omega}(\|\bar{u}-u^{k+1}\|+\|u^{k}-u^{k+1}\|)\|\bar{u}-u^{k+1}\|\nonumber\\
&\leq&\|q^k\|L_{\Omega}\left[ 2\|\bar{u}-u^{k+1}\|^2+ \frac{1}{2}\left(\|u^{k}-u^{k+1}\|^2 + \|\bar{u}-u^{k+1}\|^2\right) \right]\nonumber\\
&\leq& c_7 \|u^k-u^{k+1}\|^2+c_8 \|\bar{u}-u^{k+1}\|^2.
\end{eqnarray}
Next, we use the gradient Lipschitz property of $G$ in Assumption~\ref{assump1} to obtain
\begin{eqnarray}\label{eq:b2}
{\cal T}_3 &=&G(u^{k+1},v^{k+1})-G(\bar{u},\bar{v})-\langle\nabla_uG(u^k,v^k),u^{k+1}-\bar{u}\rangle-\langle\nabla_vG(u^k,v^k),v^{k+1}-\bar{v}\rangle\nonumber\\
&=&G(u^{k+1},v^{k+1})-G(\bar{u},\bar{v})-\langle\nabla_uG(\bar{u},\bar{v}),u^{k+1}-\bar{u}\rangle-\langle\nabla_vG(\bar{u},\bar{v}),v^{k+1}-\bar{v}\rangle\nonumber\\
&&+\langle\nabla_uG(\bar{u},\bar{v})-\nabla_uG(u^{k+1},v^{k+1}),u^{k+1}-\bar{u}\rangle+\langle\nabla_vG(\bar{u},\bar{v})-\nabla_vG(u^{k+1},v^{k+1}),v^{k+1}-\bar{v}\rangle\nonumber\\
&&+\langle\nabla_uG(u^{k+1},v^{k+1})-\nabla_uG(u^{k},v^{k}),u^{k+1}-\bar{u}\rangle+\langle\nabla_vG(u^{k+1},v^{k+1})-\nabla_vG(u^{k},v^{k}),v^{k+1}-\bar{v}\rangle\nonumber\\
&\leq& 2L_G\left[\|\bar{u}-u^{k+1}\|^2+\|\bar{v}-v^{k+1}\|^2\right]+\frac{L_G}{2}\left[\|u^{k}-u^{k+1}\|^2+\|v^k-v^{k+1}\|^2\right].
\end{eqnarray}
By the gradient Lipschitz of $H$ in Assumption~\ref{assump1}, we have
\begin{eqnarray}\label{eq:b3}
{\cal T}_4 &=&H(v^{k+1})-H(\bar{v})-\langle\nabla H(v^k),v^{k+1}-\bar{v}\rangle\nonumber\\
&=&H(v^{k+1})-H(\bar{v})-\langle\nabla H(\bar{v}),v^{k+1}-\bar{v}\rangle+\langle\nabla H(\bar{v})-\nabla H(v^{k+1}),v^{k+1}-\bar{v}\rangle\nonumber\\
&&+\langle\nabla H(v^{k+1})-\nabla H(v^k),v^{k+1}-\bar{v}\rangle\nonumber\\
&\le & \frac{L_H}{2} \|\bar{v}-v^{k+1}\|^2 + L_H \|\bar{v}-v^{k+1}\|^2 + L_H \|v^k-v^{k+1}\| \|\bar{v}-v^{k+1}\| \nonumber \\
&\leq & 2L_H \|\bar{v}-v^{k+1}\|^2+\frac{L_H}{2}\|v^k-v^{k+1}\|^2.
\end{eqnarray}
Finally, by the boundedness of the sequence $\{w^k\}$ according to Statement (ii) of Theorem~\ref{theo:convergence},  and $\sigma \delta_k \le \epsilon^k \le \delta_k$ with
$\delta_k =\beta \left( L_G+\|q^k\|L_{\Omega}+\gamma (L^0_{\Theta})^2+\frac{14\gamma\|B\|^2(L^0_{\Theta})^2}{\lambda_{\min}(B^{\top}B)}+\frac{14(L_G+\gamma\|B\| L^0_{\Theta})^2}{\gamma\lambda_{\min}(B^{\top}B)}+1\right)^{-1}$,
there exists $\underline{\epsilon}>0$ such that $\epsilon^k\ge \underline{\epsilon}$ for all $k$.
Since $p^{k+1}-p^k=\gamma\left(\Theta(u^{k+1})+Bv^{k+1}\right)$, term ${\cal T}_5$ can now be upper bounded as follows:
\begin{eqnarray}\label{eq:b4}
{\cal T}_5 &=&\frac{1}{\epsilon^k}D(\bar{u},u^k)+\frac{\gamma}{2}\|B(\bar{v}-v^k)\|^2+\frac{\gamma}{2}\|\Theta(u^{k+1})+Bv^{k+1}\|^2 \nonumber \\
& \leq &\frac{L_K}{2\underline{\epsilon}}\|\bar{u}-u^k\|^2+\frac{\gamma\|B\|^2}{2}\|\bar{v}-v^k\|^2+\frac{1}{2\gamma}\|p^{k}-p^{k+1}\|^2 \nonumber \\
& \leq &\frac{L_K}{\underline{\epsilon}}\left[\|\bar{u}-u^{k+1}\|^2+\|u^k-u^{k+1}\|^2\right]+\gamma\|B\|^2\left[\|\bar{v}-v^{k+1}\|^2+\|v^k-v^{k+1}\|^2\right] \nonumber \\
&      & +\frac{1}{2\gamma}\|p^{k}-p^{k+1}\|^2 .
\end{eqnarray}
Now, substituting the upper bounds~\eqref{eq:b1},~\eqref{eq:b1_1},~\eqref{eq:b2},~\eqref{eq:b3} and~\eqref{eq:b4} in~\eqref{eq:Lbw}, we readily derive $c_6$ to satisfy \eqref{eq:ctg}.
\end{proof}

\begin{proposition}[metric subregularity implies VP-EB]\label{lemma:ms-linear}
Let $\{w^k\}$ be generated by NAPP-AL, $\bar{w}$ be one cluster point of $\{w^k\}$. If Assumption~\ref{assump3} holds with $\delta$, the mapping $\partial\mathcal{L}_{\gamma}(w)$ is metric subregularity around $(\bar{w},0)$ with $\kappa_3>0$ and $\eta>0$ ($\eta<\delta$), then the VP-EB holds at $\bar{w}$.
\end{proposition}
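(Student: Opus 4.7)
The plan is to combine the metric subregularity hypothesis with the subgradient bound from Proposition~\ref{prop} and the cost-to-go inequality of Lemma~\ref{lemma:cost-to-go}, evaluated not at $\bar w$ itself but at a nearby stationary point obtained by projection. First, pick $\eta < \delta$ small and $\nu > 0$ small, and suppose the iterate $w^{k+1}$ lies in $\mathbb{B}(\bar{w};\eta)$ with $\mathcal{L}_{\gamma}(w^{k+1}) < \Lambda^*+\nu$. Since $w^{k+1}\in\mathbb{B}(\bar w;\eta)$, the metric-subregularity of $\partial\mathcal{L}_\gamma$ gives ${\rm dist}(w^{k+1}, \bar{\mathbf{W}}) \le \kappa_3 \, {\rm dist}(0,\partial\mathcal{L}_\gamma(w^{k+1}))$, and Proposition~\ref{prop} (whose hypotheses are met along the NAPP-AL trajectory) yields ${\rm dist}(0,\partial\mathcal{L}_\gamma(w^{k+1}))\le c_5\|w^k-w^{k+1}\|$. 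Chaining these, I obtain ${\rm dist}(w^{k+1},\bar{\mathbf{W}})\le \kappa_3 c_5 \|w^k-w^{k+1}\|$.

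Next I would choose a projection $w^\star\in\bar{\mathbf{W}}$ with $\|w^\star-w^{k+1}\|={\rm dist}(w^{k+1},\bar{\mathbf{W}})$. A triangle inequality gives $\|w^\star-\bar w\|\le \|w^\star-w^{k+1}\|+\|w^{k+1}-\bar w\|\le \kappa_3 c_5\|w^k-w^{k+1}\|+\eta$. Because Theorem~\ref{theo:convergence}(ii) guarantees $\|w^k-w^{k+1}\|\to 0$, by further shrinking $\eta$ and $\nu$ (which together with Theorem~\ref{theo:convergence}(iii) localize the iterates along the tail) I may ensure $\|w^\star-\bar w\|\le \delta$. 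Assumption~\ref{assump3} then delivers $\mathcal{L}_\gamma(w^\star)\le \mathcal{L}_\gamma(\bar w)=\Lambda^*$.

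Now invoke Lemma~\ref{lemma:cost-to-go} with the general stationary point $w^\star$ playing the role of $\bar w$ (the proof of that lemma uses only feasibility and stationarity of the reference point, together with Lipschitz and boundedness constants valid uniformly in a neighborhood of $\bar w$). This produces
\[
\mathcal{L}_\gamma(w^{k+1})-\mathcal{L}_\gamma(w^\star)\le c_6\left(\|w^\star-w^{k+1}\|^2+\|w^k-w^{k+1}\|^2\right).
\]
Combining with $\mathcal{L}_\gamma(w^\star)\le\Lambda^*$ and the bound on $\|w^\star-w^{k+1}\|$,
\[
\mathcal{L}_\gamma(w^{k+1})-\Lambda^*\le c_6\left[(\kappa_3 c_5)^2+1\right]\|w^k-w^{k+1}\|^2,
\]
so VP-EB holds with $\kappa_1:=c_6\left[(\kappa_3 c_5)^2+1\right]$.

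The main obstacle I anticipate is the auxiliary book-keeping that makes the projection $w^\star$ lie in the Assumption~\ref{assump3} zone $\{w\in\bar{\mathbf{W}}:\|w-\bar w\|\le\delta\}$ uniformly in $k$: it requires jointly choosing $\eta<\delta$ and $\nu$ small enough, and then invoking $\|w^k-w^{k+1}\|\to 0$ to absorb the $\kappa_3 c_5\|w^k-w^{k+1}\|$ term. A secondary subtle point is to verify that the constant $c_6$ of Lemma~\ref{lemma:cost-to-go} is in fact uniform for stationary points near $\bar w$ (and not only for the specific $\bar w$ used in its statement); inspection of its proof shows the bound depends only on the Lipschitz constants in Assumption~\ref{assump1}, the boundedness of $\{w^k\}$, and the lower bound $\underline\epsilon$ on $\epsilon^k$, so this extension is harmless.
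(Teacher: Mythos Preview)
Your proposal is correct and follows essentially the same route as the paper: project $w^{k+1}$ onto $\bar{\mathbf{W}}$, bound the projection distance via metric subregularity and the uniform subgradient estimate (Theorem~\ref{theo:convergence}(iv), which supplies the constant $c_5$; Proposition~\ref{prop} alone gives only the $k$-dependent $h(u^k,p^k)$), apply the cost-to-go inequality at the projected point, and invoke Assumption~\ref{assump3} to replace $\mathcal{L}_\gamma(w^\star)$ by $\Lambda^*$. Your anticipated ``main obstacle'' dissolves once you note that $\bar w\in\bar{\mathbf{W}}$ itself, so $\|w^\star-w^{k+1}\|={\rm dist}(w^{k+1},\bar{\mathbf{W}})\le\|w^{k+1}-\bar w\|<\eta$ and hence $\|w^\star-\bar w\|<2\eta\le\delta$ whenever $\eta\le\delta/2$; this is exactly the paper's (terser) argument, and it requires neither $\|w^k-w^{k+1}\|\to 0$ nor any shrinking of $\nu$.
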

\begin{proof}
Let $w_p^{k+1}\in\bar{\mathbf{W}}$ such that $\|w^{k+1}-w_p^{k+1}\|={\rm dist}(w^{k+1},\bar{\mathbf{W}})$. By the cost-to-go inequality in Lemma~\ref{lemma:cost-to-go} with $\bar{w}=w_p^{k+1}$, we have
\begin{equation}\label{eq:ms1}
\mathcal{L}_{\gamma}(w^{k+1})-\mathcal{L}_{\gamma}(w_p^{k+1})\leq c_6 \left(\|w^{k+1}-w_p^{k+1}\|^2+\|w^k-w^{k+1}\|^2\right).
\end{equation}
Since $w^{k+1}\in\mathbb{B}(\bar{w};\eta)$, we have that $w_p^{k+1}\in\mathbb{B}(\bar{w};\eta)$. Because $\partial\mathcal{L}_{\gamma}(w)$ is metric subregular around $(\bar{w},0)$, there exists $\mathbb{B}(\bar{w};\eta)$ such that if $w^{k+1}\in\mathbb{B}(\bar{w};\eta)$ then \begin{eqnarray}\label{eq:ms2}
\|w^{k+1}-w_p^{k+1}\|^2={\rm dist}^2(w^{k+1},\bar{\mathbf{W}})&\leq&(\kappa_3)^2 {\rm dist}^2(0,\partial\mathcal{L}_{\gamma}(w^{k+1})) \nonumber \\
&\overset{\mbox{(iv) of Theorem~\ref{theo:convergence}}}{\leq} &(\kappa_3 c_5)^2\|w^k-w^{k+1}\|^2 . 
\end{eqnarray}

Now, under the condition that Assumption 3 holds with $\delta\geq\eta$, and by the definition of $w_p^{k+1}$ and the fact $w_p^{k+1}\in\mathbb{B}(\bar{w};\eta)$, 
 we conclude that $\mathcal{L}_{\gamma}(w_p^{k+1}) \le \mathcal{L}_{\gamma}(\bar{w})=\Lambda^*$. Using~\eqref{eq:ms1} and~\eqref{eq:ms2}, 
we have
$$
\mathcal{L}_{\gamma}(w^{k+1})-\Lambda^*\leq c_6\left[ (\kappa_3 c_5)^2 + 1 \right] \cdot \|w^k-w^{k+1}\|^2,\qquad\forall w^{k+1}\in\mathbb{B}(\bar{w};\eta),
$$
ensuring that VP-EB is satisfied at $\bar{w}$.
\end{proof}

\end{document}